\newtheorem{theorem}{Theorem}[section]
\newtheorem{lemma}[theorem]{Lemma}
\newtheorem{proposition}[theorem]{Proposition}
\newtheorem{corollary}[theorem]{Corollary}
\theoremstyle{definition}
\newtheorem{definition}[theorem]{Definition}
\newtheorem{example}[theorem]{Example}
\theoremstyle{remark}
\newtheorem{remark}[theorem]{Remark}
\numberwithin{equation}{section}
\begin{document}

\setcounter{page}{1}

\title[Calder\'on-Lozanovskii interpolation]{Calder\'on-Lozanovskii interpolation on quasi-Banach lattices}

\author[Y. Raynaud, \MakeLowercase{and} P. Tradacete]{Yves Raynaud,$^1$ \MakeLowercase{and} Pedro Tradacete$^2$}

\address{$^{1}$Institut de Math\'ematiques de Jussieu-Paris Rive Gauche, CNRS and UPMC-Univ.~Paris-06, case
186,  75005 Paris, France}
\email{\textcolor[rgb]{0.00,0.00,0.84}{yves.raynaud@upmc.fr}}

\address{$^{2}$Department of Mathematics\\ Universidad Carlos III de Madrid\\ 28911, Legan\'es, Madrid, Spain.}
\email{\textcolor[rgb]{0.00,0.00,0.84}{ptradace@math.uc3m.es}}

\let\thefootnote\relax
\subjclass[2010]{Primary 46M35; Secondary 46B42, 47L20.}

\keywords{Quasi-Banach lattice, Interpolation, Calder\'on-Lozanovskii spaces.}

\begin{abstract}
We consider the Calder\'on-Lozanovskii construction $\varphi(X_0,X_1)$ in the context of quasi-Banach lattices and provide an extension of a result by V. I. Ovchinnikov concerning the associated interpolation methods $\varphi^c$ and $\varphi^0$. Our approach is based on the interpolation properties of $(\infty,1)$-regular operators between quasi-Banach lattices.
\end{abstract} \maketitle

\section{Introduction}
The aim of this note is to study the interpolation properties of the Calder\'on-Lozanovskii construction in the quasi-Banach lattice setting. Let us start by recalling this construction: Given $(X_0,X_1)$ a compatible pair of quasi-Banach lattices and a function $\varphi:\mathbb{R}_+^2\rightarrow \mathbb{R}_+$ which is homogeneous and non-decreasing in each argument, we consider the space $\varphi(X_0,X_1)$ of those $x\in X_0+X_1$ such that $|x|\leq \varphi(x_0,x_1)$  for some $x_0\in X_0$ and $x_1\in X_1$. This space becomes a quasi-Banach lattice when endowed with the quasi-norm
$$
\|x\|_{\varphi(X_0,X_1)}=\inf\{\lambda>0:|x|\leq \lambda\varphi(x_0,x_1),\,\|x_0\|_{X_0}\leq1,\|x_1\|_{X_1}\leq1\}.
$$

This space was introduced by G. Ya. Lozanovskii and studied in \cite{Lozanovskii4} (see also the references therein). In particular, a lot of work has been done for the case of $\varphi(s,t)=s^{1-\theta} t^{\theta}$ for some $\theta\in(0,1)$, which yields the Calder\'on product $X_0^{1-\theta}X_1^\theta$ (see \cite{Calderon}). The relation between this and the complex interpolation methods has been carefully investigated in the literature (see \cite{Calderon,KM,RT,Sestakov}).

There is an obvious interest in extending interpolation results which are valid in the Banach space, or Banach lattice, setting to the more general context of quasi-Banach spaces (see for instance \cite{CMS, GM, Kalton2, M}).

Our interest in this note is to relate the construction $\varphi(X_0,X_1)$ with two well-known interpolation functors. In this respect, recall that given quasi-normed spaces $X$ and $Y$, such that there is a continuous inclusion $i:X\hookrightarrow Y$, the Gagliardo completion of $X$ in $Y$ is the quasi-normed space whose unit ball is the closure of $i(B_X)$ in $Y$, where as usual $B_X$ denotes the unit ball of $X$; note that when $Y$ is complete, this clearly defines a quasi-Banach space. Let us denote $\varphi^c(X_0,X_1)$ the Gagliardo completion of the space $\varphi(X_0,X_1)$ in $X_0+X_1$. Also, let $\varphi^0(X_0,X_1)$ denote the closure of the intersection $X_0\cap X_1$ in $\varphi(X_0,X_1)$. We obviously have the following bounded inclusions:
 $$
 \varphi^0(X_0,X_1)\subset \varphi(X_0,X_1)\subset \varphi^c(X_0,X_1).
 $$

It was proved by V. I. Ovchinnikov that $\varphi^0$ and $\varphi^c$ are interpolation functors in the category of Banach lattices of measurable functions (see \cite{Ovchi} and \cite[Theorem 4.3.11]{BK}). Earlier attempts to extend these interpolation functors to the category of quasi-Banach lattices have been made by P. Nilsson \cite{Nilsson} and V. I. Ovchinnikov \cite{Ovchi2}.

Our main result in this paper is the extension of this fact to the category of quasi-Banach lattices with the $K_{\infty,1}$ property: i.e. those spaces $X$ for which the following inequality holds
$$
\big\|\max_{1\leq i\leq n}|x_i|\big\|\leq C\max_{|a_i|\leq1}\Big\|\sum_{i=1}^n a_ix_i\Big\|,
$$
for some constant $C>0$ independent of $(x_i)_{i=1}^n\subset X$ (see Section \ref{Krivine property} below). It should be noted that a large class of quasi-Banach lattices, namely that of $L$-convex quasi-Banach lattices, introduced by N. Kalton in \cite{kalton}, have the $K_{\infty,1}$ property (see also \cite{Nilsson}, in connection with the interpolation of $L$-convex lattices).

An important ingredient in our proof will be the class of $(p,q)$-regular operators, i.e. those satisfying estimates of the form
$$
\bigg\|\bigg(\sum_{i=1}^n|Tx_i|^p\bigg)^{\frac1p}\bigg\|\leq K \bigg\|\bigg(\sum_{i=1}^n|x_i|^q\bigg)^{\frac1q}\bigg\|.
$$
This class of operators was introduced by A. V. Bukhvalov in \cite{B1}, where some interpolation results between Banach lattices were obtained. It will be shown in Theorem \ref{gagliardo} that $(\infty,1)$-regular operators have good interpolation properties with respect to the Calder\'on-Lozanovskii construction. This fact will allow us to extend further the interpolation functors $\varphi^c$ and $\varphi^0$.

\section{Definitions and preliminaries}

Let $\mathbb R_+=\{x\in\mathbb R:x\geq 0\}$. Recall that a quasi-Banach space $(X,\|\cdot\|)$ is a vector space which is complete for the metric induced by the quasi-norm $\|\cdot\|:X\rightarrow \mathbb R_+$, that satisfies
\begin{align*}
&\|x\|=0\Leftrightarrow x=0\\
&\|\lambda x\|=|\lambda|\|x\|\\
&\|x+y\|\leq C(\|x\|+\|y\|)
\end{align*}
where $C\geq1$ is independent of $x,y\in X$. If moreover, $X$ is a vector lattice with $\|x\|\leq\|y\|$ whenever $|x|\leq |y|$, then we say that $X$ is a quasi-Banach lattice.

We will denote by $\mathcal P$ the set of all functions $\varphi:(0,\infty)\times (0,\infty)\rightarrow \mathbb{R}_+$ satisfying \begin{align*}
&\varphi(\lambda s,\lambda t)=\lambda\varphi(s,t) \textrm{ for every }s,t,\lambda>0\\
&\varphi(\cdot,t)\textrm{ is non-decreasing for every } t>0,\\
&\varphi(s,\cdot)\textrm{ is non-decreasing for every } s>0.
\end{align*}
We will usually make the normalization $\varphi(1,1)=1$. Given $\varphi\in\mathcal P$, let us denote $\varphi_0(t)=\varphi(t,1)$ and $\varphi_1(t)=\varphi(1,t)$. Note that
\[\varphi_1(t)=t\varphi_0(1/t)\]
It follows that both $\varphi_0$ and $\varphi_1$ are quasi-concave functions (i.e., $\varphi_i(t)$ is non-decreasing and $\varphi_i(t)/t$ is non-increasing, for $i=0,1$). We will make repeated use of the fact that every quasi-concave function is equivalent, up to a universal constant, to a concave function (cf. \cite[Corollary 3.1.4]{BK}). For $0<s<t$ we have
\[ \varphi_i(s)\le\varphi_i(t)\le \frac ts\varphi_i(s)\]
thus $\varphi_i$ is continuous on $(0,\infty)$. It follows from the equations
\[\varphi(s,t)=t\varphi_0(s/t)=s\varphi_1(t/s) \]
that $\varphi$ is continuous on $(0,\infty)\times (0,\infty)$.
 Since $\varphi_i$ is increasing, it has a right limit $\varphi_i(0^+)$ at $0$ and thus has a continuous extension $\bar\varphi_i$ to $\mathbb R_+$. Let us extend $\varphi$ to a  function $\bar\varphi$ on $\mathbb R_+^2$ by setting
\[\bar\varphi(s,0)=s\varphi_1(0^+)\hbox{ and } \bar\varphi(0,t)= t\varphi_0(0^+)\]
This extension is continuous. Indeed, since $\bar\varphi(s,t)=s\bar \varphi_1(t/s)$ for $s>0, t\ge 0$ (resp. $\bar\varphi(s,t)=t\bar \varphi_0(s/t)$ for $s\ge 0, t> 0$) $\bar\varphi$ is continuous on $\mathbb R_+^2\setminus\{(0,0)\}$; moreover from $\bar\varphi(s,t)\le (s\vee t) \varphi(1,1)$ it follows that $\bar\varphi$ is also continuous at $(0,0)$. We shall from now on denote simply by $\varphi$ the unique continuous extension of $\varphi$ to $\mathbb R_+^2$.

Given quasi-Banach lattices $X_0,X_1$, we say that $(X_0,X_1)$ is a compatible pair of quasi-Banach lattices when there exist a (Hausdorff, locally solid) topological vector lattice $X$, and inclusions $j_i:X_i\hookrightarrow X$ which are continuous, interval preserving, lattice homomorphisms, for $i=0,1$. In this way, the space
$$
X_0+X_1=\{x\in X: x=x_0+x_1,\,\textrm{with}\,x_0\in X_0,\, x_1\in X_1\}
$$
becomes a quasi-Banach lattice, endowed with the quasi-norm
$$
\|x\|=\inf\{\|x_0\|_{X_0}+\|x_1\|_{X_1}:x=x_0+x_1\},
$$
which contains $X_0$ and $X_1$ as (non-closed) ideals. 

Note that this setting is more general than the one considered in \cite{BK} (where $X$ is the space of measurable functions over some measure space) or in \cite{Lozanovskii4} (where $X$ is a $C_\infty(Q)$-space, i.e. the space of extended continuous scalar functions with dense domain over a Stonean compact space $Q$). In particular, $X_0$ and $X_1$ need not to be order complete.

Now, given a compatible pair of quasi-Banach lattices $(X_0,X_1)$ and a function $\varphi\in\mathcal P$, let us consider the Calder\'on-Lozanovskii space \cite{Lozanovskii1,Lozanovskii4}:
$$
\varphi(X_0,X_1)=\{x\in X_0+X_1:|x|\leq\varphi(x_0,x_1)\,\textrm{for some }x_0\in X_0^+,\,x_1\in X_1^+\}.
$$
Here, for any pair of positive elements $x_0,x_1$ in a quasi-Banach lattice, $\varphi(x_0,x_1)$ is defined in an unambiguous way by means of Krivine's functional calculus for continuous positively 1-homogeneous functions on $\mathbb R^2$ (see \cite[pp. 40--42]{LT2}, \cite{popa}). Indeed, $\varphi$ may be extended to such a function (e.g., $\hat\varphi(s,t)=\varphi(s\vee 0,t\vee 0)$).

The space $\varphi(X_0,X_1)$ is a quasi-Banach lattice equipped with the quasi-norm
$$
\|x\|_{\varphi(X_0,X_1)}=\inf\{\lambda>0:|x|\leq \lambda\varphi(x_0,x_1),\,\|x_0\|_{X_0}\leq1,\|x_1\|_{X_1}\leq1\}.
$$
Actually, we have
$$
\|x+y\|_{\varphi(X_0,X_1)}\leq\max\{C_0,C_1\}(\|x\|_{\varphi(X_0,X_1)}+\|y\|_{\varphi(X_0,X_1)}),
$$
where $C_i$ is the constant appearing in the triangle inequality corresponding to $X_i$ ($i=0,1$).

Given a function $\varphi$ as above, there is a natural decomposition into piecewise linear functions due to Y. A. Brudnyi and N. Y. Kruglyak (see \cite[Proposition 3.2.5]{BK}, or \cite{KMP}). We present next a small modification of this construction which is more suitable to our purposes.

\begin{lemma}\label{l:quasi-concave}
Let $\varphi\in \mathcal P$. Given $q>1$, there exist $M,N\in\mathbb N\cup\{\infty\}$, extended sequences $(t_k)_{k=-2M}^{2N}\subset[0,+\infty]$, and $(\varepsilon_k)_{k=-M}^N\subset[0,1]$ satisfying the following properties:
\begin{enumerate}
\item $(t_k)_{k=-2M}^{2N}$ is increasing, $0<\varepsilon_k<\min\{t_{2k}-t_{2k-1},t_{2k+3}-t_{2k+2}\}$.
\item\label{sum phi min} For every $s,t\in(0,+\infty)$ it holds that
$$
\sum_{k=-M}^N\varphi(1,t_{2k+1})\min\Big(s,\frac{t}{t_{2k+1}}\Big)\leq\frac{q+1}{q-1}\varphi(s,t).
$$
\item\label{star} for all $t\in[t_{2k}-\varepsilon_k,t_{2k+2}+\varepsilon_k]$
$$
\varphi(1,t)\leq q\varphi(1,t_{2k+1})\min\Big(1,\frac{t}{t_{2k+1}}\Big).
$$
\end{enumerate}
The notation here is consistent in the following sense:
\begin{itemize}
\item If $M=\infty$ then $\lim_{k\rightarrow-\infty}t_k=0=\lim_{k\rightarrow-\infty}\varepsilon_k$.
\item If $N=\infty,$ then $\lim_{k\rightarrow+\infty}t_k=+\infty,$ $\lim_{k\rightarrow+\infty}\varepsilon_k=0.$
\item If both $M,N$ are finite, then $t_{-2M}=0$, $t_{2N}=+\infty$, $\varepsilon_{-M}=\varepsilon_N=0$.
\end{itemize}
\end{lemma}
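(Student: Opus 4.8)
The plan is to strip the statement down to a one–variable problem about the quasi-concave function $\varphi_1$ and then carry out a greedy node selection in the spirit of Brudnyi--Kruglyak. By homogeneity $\varphi(s,t)=s\,\varphi_1(t/s)$, so, writing $g(t)=\varphi_1(t)$ and $h(t)=\varphi_1(t)/t$, the two-variable inequalities \eqref{sum phi min} and \eqref{star} become, after dividing by $s$ and setting $u=t/s$, the one–variable statements $\sum_k g(t_{2k+1})\min(1,u/t_{2k+1})\le\frac{q+1}{q-1}g(u)$ and $g(u)\le q\,g(t_{2k+1})\min(1,u/t_{2k+1})$ on the relevant intervals. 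Here $g$ is non-decreasing and continuous, $h$ is non-increasing and continuous, and $g=\mathrm{id}\cdot h$. Using the quoted equivalence of quasi-concave and concave functions (\cite[Corollary 3.1.4]{BK}) I would first replace $\varphi_1$ by a comparable concave function, so that $g$ may be taken concave; this reduction is precisely what will keep the constant in \eqref{sum phi min} under control, since a purely quasi-concave $g$ can dip below its chords and break that estimate.

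Next I would select the centres $t_{2k+1}$ greedily so that both $g$ and $h$ move by the factor $q$ from one centre to the next: given $t_{2k+1}$, let $t_{2k+3}=\max\{g^{-1}(q\,g(t_{2k+1})),\,h^{-1}(q^{-1}h(t_{2k+1}))\}$, the smallest point at which simultaneously $g\ge q\,g(t_{2k+1})$ and $h\le q^{-1}h(t_{2k+1})$, and symmetrically to the left. By construction $g(t_{2k+3})\ge q\,g(t_{2k+1})$ and $h(t_{2k+1})\ge q\,h(t_{2k+3})$ for every $k$, with at least one equality at each step. I would then place the separator $t_{2k+2}$ strictly between the point where $g$ first reaches $q\,g(t_{2k+1})$ and the point where $h$ first reaches $q\,h(t_{2k+3})$; the $q$-separation of the centres is exactly what forces this open interval to be non-empty, yielding the strict inequalities $g(t_{2k+2})<q\,g(t_{2k+1})$ and $h(t_{2k+2})<q\,h(t_{2k+3})$.

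With this in place, \eqref{star} reduces, after splitting $[t_{2k},t_{2k+2}]$ at the centre $t_{2k+1}$, to $h(t_{2k})\le q\,h(t_{2k+1})$ on the left half and $g(t_{2k+2})\le q\,g(t_{2k+1})$ on the right half, both holding strictly by the previous step; strictness lets me pick $\varepsilon_k>0$ (also below the gaps required in (1)) so that the estimate survives on the enlarged interval $[t_{2k}-\varepsilon_k,t_{2k+2}+\varepsilon_k]$. For \eqref{sum phi min} I would use that $S(u):=\sum_k g(t_{2k+1})\min(1,u/t_{2k+1})$ is concave and affine on each $[t_{2k+1},t_{2k+3}]$; since $g$ is concave it suffices to check $S\le\frac{q+1}{q-1}g$ at the nodes $u=t_{2k+1}$, where the $q$-geometric separation gives $\sum_{j\le k}g(t_{2j+1})\le\frac{q}{q-1}g(t_{2k+1})$ for the already-constant terms and $t_{2k+1}\sum_{j>k}h(t_{2j+1})\le\frac{1}{q-1}g(t_{2k+1})$ for the still-linear ones, summing to the required $\frac{q+1}{q-1}g(t_{2k+1})$.

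Finally I would read off $M$ and $N$ from the behaviour of $\varphi_1$ at the two ends: the leftward recursion keeps going as long as $g$ can be divided and $h$ multiplied by $q$ within their ranges, so it runs forever (with $t_k\to0$, $M=\infty$) precisely when $\varphi_1(t)/t\to\infty$ as $t\to0^+$, and otherwise terminates after finitely many steps, in which case one sets $t_{-2M}=0$ and $\varepsilon_{-M}=0$; the right end is treated symmetrically through $\lim_{t\to\infty}\varphi_1(t)/t$. The main obstacle is the simultaneous fulfilment of \eqref{sum phi min} and \eqref{star} with the sharp constants $\tfrac{q+1}{q-1}$ and $q$: these confine the centres to a narrow band—too sparse and \eqref{sum phi min} fails, too dense and no separator leaving room for the $\varepsilon_k$-enlargement in \eqref{star} survives—so the crux is checking that the greedy choice lands inside this band, resting on the concavity reduction that governs the constant in \eqref{sum phi min}; the bookkeeping at the degenerate ends is the remaining technical point.
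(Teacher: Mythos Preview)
Your proposal is correct and follows the same overall approach as the paper: reduce to the one-variable quasi-concave function $\varphi_1$, replace it by an equivalent concave function, and run a Brudnyi--Kruglyak greedy selection of nodes so that $g$ and $h=g/\mathrm{id}$ jump by a fixed factor between consecutive centres.

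The one noteworthy difference is how the $\varepsilon_k$-enlargement in \eqref{star} is obtained. The paper does not try to squeeze strict inequalities out of the construction at parameter $q$; instead it applies \cite[Proposition~3.2.5]{BK} with an auxiliary $q'\in(1,q)$, obtaining $\varphi_1(t)\le q'\varphi_1(t_{2k+1})\min(1,t/t_{2k+1})$ on the closed interval $[t_{2k},t_{2k+2}]$ (with equality at the endpoints, by the defining relations $h(t_{2k})=q'h(t_{2k+1})$ and $g(t_{2k+2})=q'g(t_{2k+1})$), and then uses the slack $q/q'>1$ together with continuity to extend to $[t_{2k}-\varepsilon_k,t_{2k+2}+\varepsilon_k]$ with constant $q$. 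Your route---placing the separators strictly inside an open interval so as to force $g(t_{2k+2})<q\,g(t_{2k+1})$ and $h(t_{2k})<q\,h(t_{2k+1})$---also works, but it is more delicate when $g$ or $h$ is constant on an interval (your ``first reaches'' and the resulting strictness need a careful choice of endpoint in that case). The $q'$-trick sidesteps this entirely and makes property~(2) immediate from the cited proposition, so it is worth knowing; your explicit geometric-series computation for \eqref{sum phi min} is exactly what underlies that proposition.
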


\begin{proof}
We work with the function $\varphi_1(t)=\varphi(1,t)$. Since $\varphi_1$ is quasi-concave, for every $s,t\in\mathbb{R}_+$, we have
$$
\varphi_1(t)\leq\max\big(1,\frac{t}{s}\big)\varphi_1(s).
$$
Thus, we can assume without loss of generality that $\varphi_1$ is a continuous concave function on $\mathbb R_+$ (cf. \cite[Corollary 3.1.4]{BK}).

According to \cite[Proposition 3.2.5]{BK}, for any $q'\in(1,q)$ there exist $M,N\in\mathbb N\cup\{\infty\}$ and an increasing sequence $(t_k)_{k=-2M}^{2N}\subset[0,+\infty]$, satisfying the following properties:
\begin{enumerate}
\item[(a)] If $M,N<\infty$, then $t_{-2M}=0$ and $t_{2N}=+\infty$. Otherwise, if $M=\infty$, then $\lim_{k\rightarrow-\infty}t_k=0,$ while if $N=\infty,$ then $\lim_{k\rightarrow+\infty}t_k=+\infty.$
\item[(b)] For $-M\leq k\leq N$ we have
$$
\frac{\varphi_1(t_{2k})}{t_{2k}}=q'\frac{\varphi_1(t_{2k+1})}{t_{2k+1}}\hspace{1cm}\textrm{ and }\hspace{1cm}\varphi_1(t_{2k+2})=q'\varphi_1(t_{2k+1}).
$$
\item[(c)] For every $s,t\in(0,+\infty)$ it holds that
$$
\sum_{k=-M}^N\varphi_1(t_{2k+1})\min\Big(s,\frac{t}{t_{2k+1}}\Big)\leq\frac{q'+1}{q'-1}\varphi(s,t).
$$
\end{enumerate}

Note that (b) yields that for $t\in[t_{2k},t_{2k}+2]$ one has
$$
\varphi_1(t)\leq q'\varphi_1(t_{2k+1})\min\Big(1,\frac{t}{t_{2k+1}}\Big).
$$
Now, for any $\varepsilon\in(0,\frac{q}{q'}-1)$, using the continuity of $\varphi_1$ we can find a sequence $(\varepsilon_k)$ with $\lim_{|k|\rightarrow+\infty}\varepsilon_k=0$,
$$
0<\varepsilon_k<\min\{t_{2k}-t_{2k-1},t_{2k+3}-t_{2k+2}\},
$$
and such that
$$
\varphi_1(t)\leq (1+\varepsilon)q'\varphi_1(t_{2k+1})\min\Big(1,\frac{t}{t_{2k+1}}\Big),
$$
for all $t\in[t_{2k}-\varepsilon_k,t_{2k+2}+\varepsilon_k]$. These sequences satisfy the required properties.
\end{proof}

Throughout, we will be using the usual local representation of a quasi-Banach lattice via $C(\Omega)$ spaces (see \cite{popa}): that is, given a positive element in a quasi-Banach lattice $e\in X$, the (non-closed) ideal generated by $e$ is isomorphic to a space $C(\Omega)$, for a certain compact Hausdorff space $\Omega$, and we can consider an injective lattice homomorphism $J:C(\Omega)\rightarrow X$ such that $J(1_\Omega)=e$ and $J(B_{C(\Omega)})=[-e,e]$.

Let us briefly recall the formal meaning of an interpolation functor between quasi-Banach lattices. We use the terminology of category theory as in \cite[2.3]{BK}. Let $\mathcal{QBL}$ denote the category of quasi-Banach lattices and bounded linear operators between them, and $\overset{\longrightarrow}{\mathcal{QBL}}$ the category of compatible pairs $\overset{\rightarrow}X=(X_0,X_1)$ of quasi-Banach lattices and linear operators between them, where a linear operator
$$
T:\overset{\rightarrow}X\rightarrow \overset{\rightarrow}Y
$$
is a bounded linear mapping $T:X_0+X_1\rightarrow Y_0+Y_1$ satisfying $T|_{X_0}:X_0\rightarrow Y_0$ and $T|_{X_1}:X_1\rightarrow Y_1$ (both being bounded too).

A functor $F:\overset{\longrightarrow}{\mathcal{QBL}}\rightarrow \mathcal{QBL}$ is called an interpolation functor if:
\begin{enumerate}
\item[(i)] For every $\overset{\rightarrow}X=(X_0,X_1)$, we have bounded inclusions $X_0\cap X_1\hookrightarrow F(\overset{\rightarrow}X)\hookrightarrow X_0+X_1$.
\item[(ii)] For every $T:\overset{\rightarrow}X\rightarrow \overset{\rightarrow}Y$, the operator $F(T)=T|_{F(\overset{\rightarrow}X)}:F(\overset{\rightarrow}X)\rightarrow F(\overset{\rightarrow}Y)$ is bounded.
\end{enumerate}

In particular, this implies that $F(\overset{\rightarrow}X)$ is an interpolation space for every $\overset{\rightarrow}X$.

\section{Interpolation of $(\infty,1)$-regular operators}

Given quasi-Banach lattices $E,F$, and $1\leq p,q< \infty$ a linear operator $T:E\rightarrow F$ is called \emph{$(p,q)$-regular} if there is a constant $K>0$ such that for every $\{x_i\}_{i=1}^n\subset E$
$$
\bigg\|\bigg(\sum_{i=1}^n|Tx_i|^p\bigg)^{\frac1p}\bigg\|\leq K \bigg\|\bigg(\sum_{i=1}^n|x_i|^q\bigg)^{\frac1q}\bigg\|.
$$
Similarly, $T$ will be called $(p,\infty)$-regular (respectively, $(\infty,q)$ regular) when
$$
\bigg\|\bigg(\sum_{i=1}^n|Tx_i|^p\bigg)^{\frac1p}\bigg\|\leq K \bigg\|\bigvee_{i=1}^n|x_i|\bigg\|, \hspace{1cm}\Bigg( \textrm{resp. } \bigg\|\bigvee_{i=1}^n|Tx_i|\bigg\|\leq K  \bigg\|\bigg(\sum_{i=1}^n|x_i|^q\bigg)^{\frac1q}\bigg\|.\Bigg)
$$
We will denote by $\rho_{p,q}(T)$ the smallest $K>0$ for which the above inequalities hold for arbitrary elements in $E$.

The class of $(p,q)$-regular operators was introduced in \cite{B1} (see also \cite{B2,Kusraev}), and has obvious connections with convexity and concavity (cf. \cite[1.d]{LT2}). It is clear that a $(p,q)$-regular operator $T$ is always bounded and $\|T\|\leq\rho_{p,q}(T)$. Also, if $T$ is $(p,q)$-regular, then it is $(p',q')$-regular for every $p'\geq p$ and $q'\leq q$, and moreover $\rho_{p',q'}(T)\leq\rho_{p,q}(T)$. In particular, among these, the largest class is that of $(\infty,1)$-regular operators, which satisfy
$$
\bigg\|\bigvee_{i=1}^n|Tx_i|\bigg\|\leq K  \bigg\|\sum_{i=1}^n|x_i|\bigg\|.
$$

If $F$ is Dedekind complete and $T:E\rightarrow F$ is a regular operator (i.e., $T$ can be written as a difference of two positive operators), then it is $(p,p)$-regular for every $1\leq p\leq \infty$, and $\rho_{p,p}(T)\leq\||T|\|$. In the converse direction, if $F$ is complemented by a positive projection in its bidual, then every $(1,1)$-regular operator $T:E\rightarrow F$ is regular \cite[p. 307]{Kusraev}.

In Section \ref{Krivine property}, we will consider spaces in which every linear operator is $(p,q)$-regular. In particular, an application of Grothendieck's inequality yields that every bounded linear operator between Banach lattices, or even L-convex quasi-Banach lattices, is $(2,2)$-regular.

We state now our main result concerning the interpolation of $(\infty,1)$-regular operators with respect to the functor $\varphi^c$.

\begin{theorem}\label{gagliardo}
Let $(X_0,X_1)$ and $(Y_0,Y_1)$ be compatible pairs of quasi-Banach lattices and $T:X_0+X_1\rightarrow Y_0+Y_1$ be a bounded operator such that $T|_{X_i}:X_i\rightarrow Y_i$ is $(\infty,1)$-regular for $i=0,1$. Then, for $\varphi\in\mathcal P$ we have that $T:\varphi^c(X_0,X_1)\rightarrow\varphi^c(Y_0,Y_1)$ is $(\infty,1)$-regular with
$$
\rho_{\infty,1}(T|_{\varphi^c(X_0,X_1)})\leq C\max\{\rho_{\infty,1}(T|_{X_0}),\rho_{\infty,1}(T|_{X_1})\},
$$
for some $C>0$ which only depends on $X_0,\,X_1,\,Y_0,\,Y_1$ and $\varphi$.
\end{theorem}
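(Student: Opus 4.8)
The plan is to prove the estimate first for finite families drawn from $\varphi(X_0,X_1)$ and then to bootstrap to the Gagliardo completion of the source. So I fix $x_1,\dots,x_n\in\varphi(X_0,X_1)$, set $z=\sum_{i=1}^n|x_i|$, and choose $u_0\in X_0^+,u_1\in X_1^+$ with $z\le\varphi(u_0,u_1)$ and $\|u_0\|_{X_0},\|u_1\|_{X_1}\le\lambda$, where $\lambda\le(1+\varepsilon)\|z\|_{\varphi(X_0,X_1)}$. Working inside the ideal generated by $u_0\vee u_1$, which I represent as a $C(\Omega)$ through the lattice homomorphism of the local representation, I may treat $u_0,u_1$ as nonnegative continuous functions and $r=u_1/u_0$ as a continuous $[0,+\infty]$-valued function; there $\varphi(u_0,u_1)=u_0\,\varphi(1,r)$ by homogeneity. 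I then apply Lemma \ref{l:quasi-concave} with a fixed $q>1$ to obtain the scales $t_k$, the gaps $\varepsilon_k$, and a continuous partition of unity $(\psi_k)$ in the variable $r$ subordinate to the intervals $(t_{2k}-\varepsilon_k,t_{2k+2}+\varepsilon_k)$ — this is precisely what the $\varepsilon_k$ are reserved for. Setting $x_i^{(k)}=x_i\,\psi_k(r)$ gives $x_i=\sum_k x_i^{(k)}$ and $\sum_i|x_i^{(k)}|=z\,\psi_k(r)$, each piece being localized to the scale $r\approx t_{2k+1}$.

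Writing $a_k=\varphi(1,t_{2k+1})$, the heart of the argument is to build a Calder\'on--Lozanovskii majorant of $\bigvee_i|Tx_i|$ in the target. For a finite index set $F$, I define $b_0^F=\bigvee_{k\in F,\,i}|T(x_i^{(k)}/a_k)|\in Y_0$ and $b_1^F=\bigvee_{k\in F,\,i}|T(t_{2k+1}x_i^{(k)}/a_k)|\in Y_1$ (finite suprema, hence available without any completeness hypothesis). Property \ref{star} of the Lemma gives, on the support of $\psi_k$, the two pointwise bounds $a_k^{-1}\sum_i|x_i^{(k)}|\le q\,u_0\,\psi_k(r)$ and $t_{2k+1}a_k^{-1}\sum_i|x_i^{(k)}|\le q\,u_1\,\psi_k(r)$; summing against the partition of unity (which collapses the sum to $u_0$, resp.\ $u_1$) and feeding the resulting families into the $(\infty,1)$-regularity of $T|_{X_0}$ and $T|_{X_1}$ yields $\|b_0^F\|_{Y_0}\le q\,\rho_{\infty,1}(T|_{X_0})\,\lambda$ and $\|b_1^F\|_{Y_1}\le q\,\rho_{\infty,1}(T|_{X_1})\,\lambda$, uniformly in $F$. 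On the other hand $b_0^F\ge g_k/a_k$ and $b_1^F\ge t_{2k+1}g_k/a_k$ for every $k\in F$, where $g_k=\bigvee_i|Tx_i^{(k)}|$, so $g_k\le a_k\min(b_0^F,b_1^F/t_{2k+1})$; summing over $k\in F$ and invoking property \ref{sum phi min} gives $\bigvee_i|Tx_i^F|\le\sum_{k\in F}g_k\le\frac{q+1}{q-1}\varphi(b_0^F,b_1^F)$, where $x_i^F=\sum_{k\in F}x_i^{(k)}$. Hence $\|\bigvee_i|Tx_i^F|\|_{\varphi(Y_0,Y_1)}\le\frac{q+1}{q-1}\,q\,\max\{\rho_{\infty,1}(T|_{X_0}),\rho_{\infty,1}(T|_{X_1})\}\,\lambda$, with a constant independent of $F$ and of the family.

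To remove the truncation I let $F$ exhaust the index set and observe that $|x_i-x_i^F|\le z\bigl(1-\sum_{k\in F}\psi_k(r)\bigr)$ is supported where $r$ is very small or very large; using $z\le u_0\varphi(1,r)$ together with the consistency conditions of Lemma \ref{l:quasi-concave} (which force $\varphi(1,t)\to0$ as $t\to0$ when $M=\infty$, and $\varphi(1,t)/t\to0$ as $t\to\infty$ when $N=\infty$, while the degenerate cases contribute only finitely many terms) one checks that $x_i^F\to x_i$ in $X_0+X_1$. Consequently $\bigvee_i|Tx_i^F|\to\bigvee_i|Tx_i|$ in $Y_0+Y_1$, and since the Gagliardo unit ball of $\varphi^c(Y_0,Y_1)$ is by definition the closure of that of $\varphi(Y_0,Y_1)$, the uniform bound passes to the limit, giving $\|\bigvee_i|Tx_i|\|_{\varphi^c(Y_0,Y_1)}\le C\max\{\rho_{\infty,1}(T|_{X_0}),\rho_{\infty,1}(T|_{X_1})\}\,\|z\|_{\varphi(X_0,X_1)}$ for $x_i\in\varphi(X_0,X_1)$. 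Finally I upgrade the source from $\varphi$ to $\varphi^c$: given $x_i\in\varphi^c(X_0,X_1)$ with $z=\sum_i|x_i|$, I approximate $z$ in $X_0+X_1$ by $z_m\in\varphi(X_0,X_1)$ with $\|z_m\|_{\varphi}\le\|z\|_{\varphi^c}+\varepsilon$ and replace $x_i$ by $x_i\,z_m/z$, which lies in $\varphi(X_0,X_1)$, has sum $z_m$, and converges to $x_i$ in $X_0+X_1$; applying the case already proved and using once more the closedness of the Gagliardo ball completes the proof.

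I expect the main obstacle to be the reconstruction step in the target together with its uniform control. Because $T$ need not preserve disjointness, the localized pieces $x_i^{(k)}$ are scattered by $T$ and their images genuinely accumulate, so $\bigvee_i|Tx_i|$ typically lies in $\varphi^c(Y_0,Y_1)$ but not in $\varphi(Y_0,Y_1)$; the role of the Gagliardo completion is exactly to absorb this. The delicate points are, first, the $F$-uniform bounds on $\|b_0^F\|_{Y_0}$ and $\|b_1^F\|_{Y_1}$, which hinge on the sharp two-sided information in Lemma \ref{l:quasi-concave} — property \ref{star} to pass from $\varphi(1,r)$ to the weights $a_k$ with the partition of unity collapsing the sum, and property \ref{sum phi min} to reassemble the majorant — and, second, the convergence $x_i^F\to x_i$, where the behaviour of $\varphi$ near the axes must be matched against whether $M$ and $N$ are finite or infinite.
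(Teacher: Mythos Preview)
Your proof follows the paper's approach closely: localize $x_i$ via a partition of unity tied to the scales of Lemma~\ref{l:quasi-concave}, bound the localized pieces in $X_0$ and $X_1$ using property~(3), apply $(\infty,1)$-regularity to produce the majorants $b_0^F,b_1^F$, reassemble via property~(2) to get $\bigvee_i|Tx_i^F|\le\frac{q+1}{q-1}\varphi(b_0^F,b_1^F)$, pass to the Gagliardo completion in the target, and finally upgrade the source by density. This core matches the paper's Lemma~\ref{l:approximation} essentially line by line, and your density step for $\varphi\rightsquigarrow\varphi^c$ on the source is a clean variant of the paper's.

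The gap is in your treatment of the degenerate boundary behaviour. When $M$ is finite (equivalently $\varphi_1(0^+)>0$), the leftmost open interval is $(0,t_{-2M+2})$, so your partition of unity satisfies $\sum_k\psi_k(0)=0$; but at $r=0$ one has $z\le u_0\,\varphi_1(0^+)$, which need not vanish. Hence $x_i^F\not\to x_i$ in $X_0+X_1$ regardless of how large $F$ is, and already your identity $x_i=\sum_kx_i^{(k)}$ fails on $\{r=0\}$. The phrase ``the degenerate cases contribute only finitely many terms'' does not fix this: including all left indices still leaves the point $r=0$ uncovered. Nor can you simply extend $\psi_{-M}$ to take the value $1$ at $r=0$, since then the $X_1$-side estimate $t_{2k+1}a_k^{-1}\sum_i|x_i^{(k)}|\le q\,u_1\,\psi_k$ breaks down there (the right-hand side is $0$). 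The symmetric problem occurs at $r=+\infty$ when $N$ is finite.

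The paper isolates precisely this obstruction by writing $\varphi=\phi+\eta$ with
\[
\phi_1(s)=\varphi_1(0^+)\vee s\lim_{t\to\infty}\frac{\varphi_1(t)}{t},\qquad \eta_1=\varphi_1-\phi_1,
\]
so that $\eta_1(0^+)=0=\lim_{t\to\infty}\eta_1(t)/t$ (forcing $M=N=\infty$ for $\eta$, where your argument goes through verbatim), while $\phi(X_0,X_1)$ coincides with one of $X_0$, $X_1$, or $X_0+X_1$ and is handled directly via Lemma~\ref{l:sumregular}. Splitting each $x_i=v_i+w_i$ along $\varphi=\phi+\eta$ by Riesz decomposition and combining the two estimates gives the claim for all $\varphi\in\mathcal P$.
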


Before giving our proof, we need some preliminaries:

\begin{lemma}\label{l:sumregular}
Let $(X_0,X_1)$ and $(Y_0,Y_1)$ be interpolation couples of quasi-Banach lattices and $T:X_0+X_1\rightarrow Y_0+Y_1$ be a bounded operator such that $T|_{X_i}:X_i\rightarrow Y_i$ is $(\infty,1)$-regular for $i=0,1$. Then $T:X_0+X_1\rightarrow Y_0+Y_1$ is $(\infty,1)$-regular with
$$
\rho_{\infty,1}(T)\leq 2\max\{\rho_{\infty,1}(T|_{X_0}),\rho_{\infty,1}(T|_{X_1})\}.
$$
\end{lemma}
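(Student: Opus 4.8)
The plan is to unwind the definition of $(\infty,1)$-regularity for the sum operator and reduce it to a single simultaneous decomposition. Fix $x_1,\dots,x_n\in X_0+X_1$, set $u=\sum_{i=1}^n|x_i|\in (X_0+X_1)^+$, and write $M=\max\{\rho_{\infty,1}(T|_{X_0}),\rho_{\infty,1}(T|_{X_1})\}$; we must bound $\big\|\bigvee_{i=1}^n|Tx_i|\big\|_{Y_0+Y_1}$ by a multiple of $\|u\|_{X_0+X_1}$. Given $\varepsilon>0$, first choose a decomposition $u=u_0+u_1$ with $\|u_0\|_{X_0}+\|u_1\|_{X_1}\le\|u\|_{X_0+X_1}+\varepsilon$. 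Since $u\ge0$ and $X_0,X_1$ are ideals, one may take $u_0,u_1\ge0$: starting from a general decomposition $u=v_0+v_1$, replace it by $u_0=u\wedge|v_0|\in X_0$ and $u_1=u-u_0=(u-|v_0|)^+\le|v_1|\in X_1$, which only decreases the two norms.

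\emph{Key step.} Produce one coherent splitting $x_i=x_i^0+x_i^1$ of all the $x_i$ at once, with $x_i^0\in X_0$, $x_i^1\in X_1$, and, crucially, $\sum_{i=1}^n|x_i^0|\le u_0$ and $\sum_{i=1}^n|x_i^1|\le u_1$. Since $|x_i|\le u$ and $0\le u_j\le u$, all the relevant elements lie in the principal ideal $I_u$ generated by $u$, so I would invoke its local representation $I_u\cong C(\Omega)$ with $u\leftrightarrow 1_\Omega$ recalled in Section 2. There $x_i\leftrightarrow f_i$ with $\sum_i|f_i|=1_\Omega$, while $u_0,u_1\leftrightarrow g_0,g_1\ge0$ with $g_0+g_1=1_\Omega$. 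Setting $x_i^0\leftrightarrow f_ig_0$ and $x_i^1\leftrightarrow f_ig_1$ gives $x_i=x_i^0+x_i^1$, while $|x_i^j|\le u_j$ forces $x_i^j\in X_j$ by the ideal property, and $\sum_i|x_i^0|=\big(\sum_i|f_i|\big)g_0\leftrightarrow g_0=u_0$, and likewise $\sum_i|x_i^1|=u_1$.

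With this decomposition the two hypotheses apply term by term. By $(\infty,1)$-regularity of $T|_{X_0}$ and $T|_{X_1}$ together with the identities just obtained, the elements $a:=\bigvee_i|Tx_i^0|\in Y_0^+$ and $b:=\bigvee_i|Tx_i^1|\in Y_1^+$ satisfy $\|a\|_{Y_0}\le M\|u_0\|_{X_0}$ and $\|b\|_{Y_1}\le M\|u_1\|_{X_1}$. Finally, from $|Tx_i|\le|Tx_i^0|+|Tx_i^1|\le a+b$ for each $i$ we get $\bigvee_i|Tx_i|\le a+b$, and monotonicity of the lattice quasi-norm on $Y_0+Y_1$ together with the definition of the sum quasi-norm yields $\big\|\bigvee_i|Tx_i|\big\|_{Y_0+Y_1}\le\|a+b\|_{Y_0+Y_1}\le\|a\|_{Y_0}+\|b\|_{Y_1}\le M(\|u_0\|_{X_0}+\|u_1\|_{X_1})\le M(\|u\|_{X_0+X_1}+\varepsilon)$. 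Letting $\varepsilon\to0$ gives the desired estimate, comfortably within the stated constant $2$.

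The hard part is exactly the key step. A naive term-by-term Riesz decomposition of each $x_i$ against $u_0+u_1$ would only provide $|x_i^j|\le u_j$, and hence the useless bound $\sum_i|x_i^0|\le nu_0$, which loses a factor $n$ and destroys the inequality. The purpose of passing to the $C(\Omega)$-representation and splitting \emph{proportionally} (multiplying by $g_0,g_1$ with $g_0+g_1=1_\Omega$) is precisely to secure the sharp bound $\sum_i|x_i^0|\le u_0$; the one thing to verify carefully is that this functional-calculus product is compatible with the lattice operations and with the inclusions $X_j\hookrightarrow X_0+X_1$, which is where the local representation is used.
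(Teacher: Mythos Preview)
Your proof is correct. The overall strategy---split each $x_i$ into $X_0$- and $X_1$-parts with simultaneous $\ell^1$-control on $\sum_i|x_i^j|$, apply $(\infty,1)$-regularity of each restriction, then reassemble via $\bigvee_i|Tx_i|\le a+b$---is exactly the paper's; the two differ only in how the splitting is produced. You pass to the $C(\Omega)$ model of the ideal $I_u$ and split proportionally by multiplying each $f_i$ by $g_0,g_1$; the paper instead invokes the Riesz decomposition property directly, obtaining from $\sum_i|z_i|\le u+v$ a decomposition $z_i=u_i+v_i$ with $\sum_i|u_i|\le 2u$ and $\sum_i|v_i|\le 2v$, purely lattice-theoretically and without any functional calculus. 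Your worry in the final paragraph is therefore misplaced: it is only the \emph{term-by-term} Riesz decomposition (each $x_i$ separately against $u_0+u_1$) that would lose a factor~$n$; applied once to the single inequality $\sum_i|x_i|\le u_0+u_1$, Riesz decomposition does give the needed $\ell^1$-bound, and this is precisely what the paper does. As a byproduct, your proportional splitting yields the sharper constant~$1$ rather than~$2$, so you in fact prove slightly more than the stated lemma.
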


\begin{proof}
Let us consider $(z_i)_{i=1}^n\subset X_0+X_1$ such that $\|\sum_{i=1}^n|z_i|\|_{X_0+X_1}<1$. Hence, there exist positive $u\in X_0$, $v\in X_1$ with $\|u\|_{X_0}+\|v\|_{X_1}<1$ and
$$
\sum_{i=1}^n|z_i|\leq u+v.
$$
Using the Riesz decomposition property (cf. \cite[Theorem 1.1.1.viii]{M-N}), we can write $z_i=u_i+v_i$ for $i=1,\ldots,n$, with $\sum_{i=1}^n|u_i|\leq 2u$, $\sum_{i=1}^n|v_i|\leq 2v$. Now, since $T|_{X_j}$ is $(\infty,1)$-regular for $j=0,1$, we have that
\begin{eqnarray*}
\Big\|\bigvee_{i=1}^n|Tu_i|\Big\|_{Y_0}\leq \rho_{\infty,1}(T|_{X_0})\Big\|\sum_{i=1}^n|u_i|\Big\|_{X_0}\leq 2\rho_{\infty,1}(T|_{X_0})\|u\|_{X_0},\\
\Big\|\bigvee_{i=1}^n|Tv_i|\Big\|_{Y_1}\leq \rho_{\infty,1}(T|_{X_1})\Big\|\sum_{i=1}^n|v_i|\Big\|_{X_1}\leq 2\rho_{\infty,1}(T|_{X_1})\|v\|_{X_1}.
\end{eqnarray*}
These, together with
$$
\bigvee_{i=1}^n|Tz_i|\leq \bigvee_{i=1}^n|Tu_i|+\bigvee_{i=1}^n|Tv_i|
$$
yield that
$$
\Big\|\bigvee_{i=1}^n|Tz_i|\Big\|_{Y_0+Y_1}\leq 2\max\{\rho_{\infty,1}(T|_{X_0}),\rho_{\infty,1}(T|_{X_1})\}.
$$
This finishes the proof.
\end{proof}

\begin{lemma}\label{l:approximation}
There is a constant $\gamma>0$ such that given $(X_0,X_1)$, $(Y_0,Y_1)$, $T:X_0+X_1\rightarrow Y_0+Y_1$ as in Theorem \ref{gagliardo}, $\varphi\in\mathcal P$ with $\lim_{t\rightarrow0^+}\varphi_1(t)=0=\lim_{t\rightarrow+\infty}\frac{\varphi_1(t)}{t}$, and $(x_i)_{i=1}^n\subset X_0+X_1$ such that $\sum_{i=1}^n|x_i|\leq\varphi(u_0,u_1)$, where $u_i\in X_i$ with $\|u_i\|_{X_i}\leq1$ for $i=0,1$, then there exist sequences $(x_i^{m})_{m\in\mathbb N}$ for $1\leq i\leq n$ satisfying:
\begin{enumerate}
\item[(i)] $|x_i^m|\leq|x_i|$ for every $m\in\mathbb N$, $1\leq i\leq n$,
\item[(ii)] $\bigvee_{i=1}^n|x_i-x_i^m|\leq (u_0\vee u_1) a_m$ for certain $a_m\in\mathbb R_+$ with $a_m\underset{m\rightarrow\infty}\longrightarrow0$,
\item[(iii)] $\sup_m\Big\|\bigvee_{i=1}^n|Tx_i^m|\Big\|_{\varphi(Y_0,Y_1)}\leq\gamma \max\{\rho_{\infty,1}(T|_{X_0}),\rho_{\infty,1}(T|_{X_1})\}$.
\end{enumerate}
\end{lemma}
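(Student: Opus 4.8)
The plan is to fix $q>1$ (say $q=2$) and apply Lemma~\ref{l:quasi-concave} to obtain the sequences $(t_k)$ and $(\varepsilon_k)$. Since $\sum_i|x_i|\le\varphi(u_0,u_1)\le u_0\vee u_1=:e$ (using $\varphi(1,1)=1$), all the $x_i$ lie in the ideal generated by $e$, so I would pass to the local representation of this ideal as a space $C(\Omega)$, in which $u_0,u_1$ become continuous functions with $u_0\vee u_1=1_\Omega$. Using the decomposition intervals $[t_{2k}-\varepsilon_k,t_{2k+2}+\varepsilon_k]$, whose bounded overlaps are guaranteed by property (1) of Lemma~\ref{l:quasi-concave}, I build a continuous partition of unity $(\psi_k)$ on $\Omega$ with $0\le\psi_k\le1$, $\sum_k\psi_k=1_\Omega$, and $\psi_k$ supported on $\{\omega:u_1(\omega)/u_0(\omega)\in[t_{2k}-\varepsilon_k,t_{2k+2}+\varepsilon_k]\}$ (these being functions of the pair $(u_0,u_1)$ through Krivine's calculus). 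Setting $x_i^{(k)}=x_i\psi_k$ and $x_i^m=\sum_{|k|\le m}x_i^{(k)}$, property (i) is immediate from $0\le\sum_{|k|\le m}\psi_k\le1$.

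For (ii), note $\bigvee_i|x_i-x_i^m|=(\bigvee_i|x_i|)\sum_{|k|>m}\psi_k\le\varphi(u_0,u_1)\sum_{|k|>m}\psi_k$. On the support of the tail $\sum_{|k|>m}\psi_k$ the ratio $u_1/u_0$ is either below some $\tau_m^-\to0$ or above some $\tau_m^+\to\infty$; writing $\varphi(u_0,u_1)=u_0\varphi_1(u_1/u_0)=u_1\varphi_0(u_0/u_1)$ and using $u_0,u_1\le e$ together with the hypotheses $\varphi_1(0^+)=0=\varphi_0(0^+)$ (the latter equal to $\lim_{t\to\infty}\varphi_1(t)/t$), one gets $\varphi(u_0,u_1)\le a_m\,e$ there, with $a_m=\max(\varphi_1(\tau_m^-),\varphi_0(1/\tau_m^+))\to0$. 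This is exactly (ii).

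The core of the argument is the choice of the two dominating elements. Because $|x_i^{(k)}|\le q\varphi(1,t_{2k+1})w_k$ with $w_k=\min(u_0,u_1/t_{2k+1})$—which follows from $\sum_i|x_i^{(k)}|=\psi_k\sum_i|x_i|\le\psi_k\varphi(u_0,u_1)$ and property (\ref{star}) on the support of $\psi_k$—each $x_i^{(k)}$ lies in $X_0\cap X_1$ (as $w_k\le u_0$ and $t_{2k+1}w_k\le u_1$). I define
$$y_0=\bigvee_{|k|\le m,\,1\le i\le n}\Big|T\Big(\tfrac{x_i^{(k)}}{\varphi(1,t_{2k+1})}\Big)\Big|,\qquad y_1=\bigvee_{|k|\le m,\,1\le i\le n}\Big|T\Big(\tfrac{t_{2k+1}\,x_i^{(k)}}{\varphi(1,t_{2k+1})}\Big)\Big|,$$
finite suprema to which the $(\infty,1)$-regularity of $T|_{X_0}$ and $T|_{X_1}$ applies directly. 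The point is that the partition of unity telescopes the resulting sums: $\sum_{k,i}\tfrac{|x_i^{(k)}|}{\varphi(1,t_{2k+1})}\le q\sum_{|k|\le m}w_k\psi_k\le qu_0$, since $w_k\le u_0$ and $\sum_k\psi_k=1$, whence $\|y_0\|_{Y_0}\le q\,\rho_{\infty,1}(T|_{X_0})$; similarly $t_{2k+1}w_k=\min(t_{2k+1}u_0,u_1)\le u_1$ gives $\sum_{k,i}\tfrac{t_{2k+1}|x_i^{(k)}|}{\varphi(1,t_{2k+1})}\le qu_1$ and $\|y_1\|_{Y_1}\le q\,\rho_{\infty,1}(T|_{X_1})$.

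Finally, since $y_0\ge V_k/\varphi(1,t_{2k+1})$ and $y_1\ge t_{2k+1}V_k/\varphi(1,t_{2k+1})$ for $V_k=\bigvee_i|Tx_i^{(k)}|$, I obtain the simultaneous order bound $|Tx_i^{(k)}|\le\varphi(1,t_{2k+1})\min(y_0,y_1/t_{2k+1})$. Summing over $|k|\le m$ (the right-hand side no longer depends on $i$) and invoking property (\ref{sum phi min}) applied via functional calculus to the pair $(y_0,y_1)$ yields
$$\bigvee_i|Tx_i^m|\le\sum_{|k|\le m}\varphi(1,t_{2k+1})\min\Big(y_0,\tfrac{y_1}{t_{2k+1}}\Big)\le\tfrac{q+1}{q-1}\varphi(y_0,y_1),$$
so that $\big\|\bigvee_i|Tx_i^m|\big\|_{\varphi(Y_0,Y_1)}\le\tfrac{q+1}{q-1}\max(\|y_0\|_{Y_0},\|y_1\|_{Y_1})\le\tfrac{(q+1)q}{q-1}\max\{\rho_{\infty,1}(T|_{X_0}),\rho_{\infty,1}(T|_{X_1})\}$, giving (iii) with a universal $\gamma$ (e.g. $\gamma=6$ for $q=2$). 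I expect the main obstacle to be precisely the choice of $y_0,y_1$ above: one must dominate every $|Tx_i^{(k)}|$ by a single $Y_0$- and a single $Y_1$-element whose norms do not accumulate in $k$, and this is achieved by writing them as suprema of $|T(\cdot)|$ so that one application of $(\infty,1)$-regularity, combined with the telescoping of the partition of unity against $w_k$ and $t_{2k+1}w_k$, absorbs all the terms at once. The only other delicate point is the rigorous construction of the $\psi_k$ as continuous functions of $(u_0,u_1)$ with the stated supports, together with the handling of the boundary where $u_0$ or $u_1$ vanishes.
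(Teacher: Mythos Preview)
Your proposal is correct and follows essentially the same route as the paper: the $C(\Omega)$ representation of the ideal generated by $u_0\vee u_1$, a continuous partition of unity subordinate to the ``level sets'' $\{t_{2k}-\varepsilon_k<u_1/u_0<t_{2k+2}+\varepsilon_k\}$, the definition of $x_i^m$ as the corresponding partial sums, and then your $y_0,y_1$ are exactly the paper's $G_0^m,G_1^m$, with the same use of $(\infty,1)$-regularity and property~(\ref{sum phi min}) to conclude. The only cosmetic difference is that the paper builds, for each $m$, a \emph{finite} partition $(\psi_k)_{|k|\le m}$ together with a remainder function $\xi_m$ (and a second auxiliary partition $(\vartheta_l^m)$ to handle~(ii)), rather than a single global partition with partial sums; this cleanly absorbs the boundary set $\{u_0=0\}\cup\{u_1=0\}$ that you flagged as delicate.
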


\begin{proof}
By Lemma \ref{l:quasi-concave}, for any $q>1$ there exist $M,N\in\mathbb N\cup\{\infty\}$, an increasing sequence $(t_k)_{k=-2M}^{2N}\subset[0,+\infty]$, and $(\varepsilon_k)_{k=-M}^N$ such that, for every $s,t\in(0,+\infty)$ we have
\begin{equation}\label{sum phi min}
\sum_{k=-M}^N\varphi_1(t_{2k+1})\min\Big(s,\frac{t}{t_{2k+1}}\Big)\leq\frac{q+1}{q-1}\varphi(s,t),
\end{equation}
and for $t\in[t_{2k}-\varepsilon_k,t_{2k+2}+\varepsilon_k]$
\begin{equation}\label{star}
\varphi_1(t)\leq q\varphi_1(t_{2k+1})\min\Big(1,\frac{t}{t_{2k+1}}\Big).
\end{equation}

Let us consider the ideal generated by $u_0\vee u_1$ in $X_0+X_1$. As usual we can consider a compact Hausdorff space $\Omega$ and a lattice homomorphism $J:C(\Omega)\rightarrow X_0+X_1$ such that $J(B_{C(\Omega)})=[-u_0\vee u_1,u_0\vee u_1]$. Since
$$
|x_i|\leq\sum_{i=1}^n|x_i|\leq\varphi(u_0,u_1)\leq u_0\vee u_1,
$$
there exist $(f_i)_{i=1}^n,h_0,h_1\in B_{C(\Omega)}$ such that $J(f_i)=x_i,J(h_0)=u_0,$ and $J(h_1)=u_1$.

Let $m\in\mathbb{N}$, and for $|k|\leq m$ let us consider the sets
$$
U_k=\{\omega\in \Omega:(t_{2k}-\varepsilon_k)h_0(\omega)<h_1(\omega)<(t_{2k+2}+\varepsilon_k)h_0(\omega)\},
$$
and
$$
V_m=\Omega\backslash\{\omega\in \Omega:t_{-2m}h_0(\omega)\leq h_1(\omega)\leq t_{2m+2}h_0(\omega)\}.
$$
Clearly, these are open subsets of $\Omega$ satisfying
$$
\Omega=V_m\cup\bigcup_{|k|\leq m} U_k.
$$
Therefore, we can consider a continuous partition of unity associated to this open covering, that is, $(\psi_k)_{|k|\leq m}$ and $\xi_m$  positive elements in $C(\Omega)$ such that for each $|k|\leq m$, $\psi_k$ is supported within $U_k$, $\xi_m$ is supported in $V_m$, and for every $\omega\in \Omega$ we have
$$
\sum_{|k|\leq m}\psi_k(\omega)+\xi_m(\omega)=1.
$$

Let us consider
$$
f_i^m=\sum_{|k|\leq m}f_i\psi_k\in C(\Omega).
$$
And denote $x_i^m=J(f_i^m)$, $y_i^k=J(f_i\psi_k)$ for $|k|\leq m$. These obviously satisfy $|y_i^k|,|x_i^m|\leq |x_i|$, for every $1\leq i\leq n$, $m\in \mathbb{N}$ and $|k|\leq m$, and
$$
x_i^m=\sum_{|k|\leq m}y_i^k.
$$

We claim that $(x_i^m)$ satisfy properties (ii) and (iii).
\medskip

In order to prove (ii), given $m\in\mathbb N$, let us consider the sets
\begin{eqnarray*}
W_1^m&=&\{\omega\in \Omega:\,h_1(\omega)<(t_{-2m}+\frac{\varepsilon_m}{2})h_0(\omega)\},\\
W_2^m&=&\{\omega\in \Omega:\,(t_{2m+2}-\frac{\varepsilon_{m+1}}{2})h_0(\omega)<h_1(\omega)\},\\
W_3^m&=&\big\{\omega\in \Omega:\,t_{-2m}h_0(\omega)<h_1(\omega)<t_{2m+2}h_0(\omega)\big\}.
\end{eqnarray*}
Since $h_0$ and $h_1$ cannot vanish simultaneously (because $h_0\vee h_1=1$), for every $m\in\mathbb N$, these open sets $W_i^m$ are such that $\bigcup_{l=1}^3 W_l^m=\Omega$. Let $(\vartheta^m_l)_{l=1,2,3}$ denote a continuous partition of unity associated to these sets, that is $\vartheta^m_l\in C(\Omega)$ with each $\vartheta^m_l$ being positive and supported in $W_l^m$, and for every $\omega\in \Omega$, and every $m\in\mathbb N$,
$$
\sum_{l=1}^3 \vartheta^m_l(\omega)=1.
$$

Note that for $1\leq i\leq n$,
$$
|(f_i-f_i^m)(\omega)|=|f_i\xi_m(\omega)|=|f_i\xi_m\Big(\sum_{l=1}^3 \vartheta^m_l\Big)(\omega)|,
$$
and since $\xi_m$ is supported in $V_m\subset \Omega\backslash W_3^m$, we have
$$
|f_i-f_i^m|\leq|f_i\xi_m\vartheta^m_1|+|f_i\xi_m\vartheta^m_2|.
$$
For $\omega\in \Omega$, we have
\begin{eqnarray}\label{eq:1}
|f_i\xi_m\vartheta^m_1(\omega)|&\leq&\varphi(h_0,h_1)\xi_m\vartheta^m_1(\omega)\\
\nonumber &\leq&\varphi(h_0(\omega),(t_{-2m}+\frac{\varepsilon_m}{2})h_0(\omega))\\
\nonumber &=& h_0(\omega)\varphi_1(t_{-2m}+\frac{\varepsilon_m}{2}).
\end{eqnarray}
Similarly, we have
\begin{eqnarray}\label{eq:2}
|f_i\xi_m\vartheta^m_2(\omega)|&\leq&\varphi(h_0,h_1)\xi_m\vartheta^m_2(\omega)\\
\nonumber &\leq&\varphi\Big(\frac{h_1(\omega)}{t_{2m+2}-\frac{\varepsilon_{m+1}}{2}},h_1(\omega)\Big)\\
\nonumber &= &h_1(\omega)\frac{\varphi_1(t_{2m+2}-\frac{\varepsilon_{m+1}}{2})}{t_{2m+2}-\frac{\varepsilon_{m+1}}{2}}.
\end{eqnarray}
Therefore, setting
$$
a_m=\varphi_1(t_{-2m}+\frac{\varepsilon_m}{2})+ \frac{\varphi_1(t_{2m+2}-\frac{\varepsilon_{m+1}}{2})}{t_{2m+2}-\frac{\varepsilon_{m+1}}{2}},
$$
and putting together the estimates \eqref{eq:1} and \eqref{eq:2} we get
$$
|x_i-x_i^m|\leq (u_0\vee u_1) a_m.
$$
The hypotheses on $\varphi_1$ clearly yield that $a_m\rightarrow0$ as $m\rightarrow\infty$, so this proves (ii).
\medskip

Finally, to prove (iii), note that by inequality (\ref{star}), for every $|k|\leq m$ and $\omega\in \Omega$ we have
\begin{eqnarray*}
\sum_{i=1}^n|f_i\psi_k(\omega)|&\leq&|\varphi(h_0(\omega),h_1(\omega))\psi_k(\omega)|\\
&\leq &h_0(\omega)\varphi_1(t_{2k+2}+\varepsilon_k)\psi_k(\omega)\\
&\leq &q\varphi_1(t_{2k+1})h_0(\omega)\psi_k(\omega),
\end{eqnarray*}
and similarly
$$
\sum_{i=1}^n|f_i\psi_k(\omega)|\leq q\frac{\varphi_1(t_{2k+1})}{t_{2k+1}}h_1(\omega)\psi_k(\omega).
$$
Therefore, the functions
$$
F^m_0=\sum_{|k|\leq m}\frac{1}{\varphi_1(t_{2k+1})}\sum_{i=1}^n|f_i\psi_k|,\hspace{1cm} F^m_1=\sum_{|k|\leq m}\frac{t_{2k+1}}{\varphi_1(t_{2k+1})}\sum_{i=1}^n|f_i\psi_k|
$$
satisfy $F^m_j\leq q h_j$ for $j=0,1.$

Now, let us consider
$$
G^m_0=\max_{|k|\leq m,1\leq i\leq n}\Big\{\frac{1}{\varphi_1(t_{2k+1})}|Ty_i^k|\Big\}
$$
in $Y_0+Y_1$. Since $T|_{X_0}:X_0\rightarrow Y_0$ is $(\infty,1)$-regular, we have

\begin{eqnarray*}
\|G^m_0\|_{Y_0}&=&\big\|\max_{|k|\leq m,1\leq i\leq n}\Big\{\frac{1}{\varphi_1(t_{2k+1})}|Ty_i^k|\Big\}\big\|_{Y_0}\\
&\leq&\rho_{\infty,1}(T|_{X_0})\Big\|\sum_{|k|\leq m}\frac{1}{\varphi_1(t_{2k+1})}\sum_{i=1}^n|y_i^k|\Big\|_{X_0}\\
&\leq&\rho_{\infty,1}(T|_{X_0})\|q u_0\|_{X_0}\\
&\leq& q \rho_{\infty,1}(T|_{X_0})
\end{eqnarray*}

While for
$$
G^m_1=\max_{|k|\leq N,1\leq i\leq n}\Big\{\frac{t_{2k+1}}{\varphi_1(t_{2k+1})}|Ty_i^k|\Big\},
$$
a similar argument yields
\begin{equation*}
\|G^m_1\|_{Y_1}\leq q \rho_{\infty,1}(T|_{X_1}).
\end{equation*}

Now, by equation (\ref{sum phi min}), we have
\begin{eqnarray*}
\max_{1\leq i\leq n}|Tx_i^m|&\leq&\max_{1\leq i\leq n}\sum_{|k|\leq m} |Ty_i^k|\\
		&\leq&\sum_{|k|\leq m} \varphi_1(t_{2k+1})\min(G^m_0,\frac{1}{t_{2k+1}}G^m_1)\\
           	&\leq&\frac{q+1}{q-1}\varphi(G^m_0,G^m_1).
\end{eqnarray*}
From this inequality, and the fact that $\|G^m_j\|_{Y_j}\leq q \rho_{\infty,1}(T|_{X_j})$ for $j=0,1$, it follows that
$$
\|\max_{1\leq i\leq n}|Tx_i^m|\|_{\varphi(Y_0,Y_1)}\leq \frac{q(q+1)}{q-1}\max\{\rho_{\infty,1}(T|_{X_0}),\rho_{\infty,1}(T|_{X_1})\}.
$$
This finishes the proof of (iii).
\end{proof}

\begin{remark}
Optimizing the estimate obtained in the previous proof for $q>1$ we could take $\gamma=3+2\sqrt{2}$.
\end{remark}

\begin{proof}[Proof of Theorem \ref{gagliardo}]

Let $R=\max\{\rho_{\infty,1}(T|_{X_0}),\rho_{\infty,1}(T|_{X_1})\}$. First, we claim that there is $K>0$ such that given $(x_i)_{i=1}^n\subset X_0+X_1$,
\begin{equation}\label{claimfi}
\text{if }\Big\|\sum_{i=1}^n |x_i|\Big\|_{\varphi(X_0,X_1)}\leq1,\text{ then } \Big\|\bigvee_{i=1}^n |Tx_i|\Big\|_{\varphi^c(Y_0,Y_1)}\leq K R.
\end{equation}

Indeed, as before let $\varphi_1(t)=\varphi(1,t)$. Without loss of generality we can assume that $\varphi_1$ is a concave function (cf. \cite[Corollary 3.1.4]{BK}). Notice that if $\lim_{t\rightarrow0^+}\varphi_1(t)=0=\lim_{t\rightarrow\infty}\frac{\varphi_1(t)}{t}$, then the conclusion follows directly from Lemma \ref{l:approximation}. Otherwise, let us consider
\begin{equation}\label{eq:decomp}
\phi_1(s)=\lim_{t\rightarrow0^+}\varphi_1(t)\vee s\lim_{t\rightarrow\infty}\frac{\varphi_1(t)}{t},\hspace{1cm}\textrm{ and }\hspace{1cm} \eta_1=\varphi_1-\phi_1.
\end{equation}
Note that, as $\phi_1$ is clearly convex, it follows that $\eta_1$ is a concave function which moreover satisfies $\lim_{t\rightarrow0^+}\eta_1(t)=0=\lim_{t\rightarrow\infty}\frac{\eta_1(t)}{t}$.

Now, if we consider $\phi(s,t)=s\phi_1\big(\frac{t}{s}\big)$ and $\eta(s,t)=s\eta_1\big(\frac{t}{s}\big)$, it follows that
\begin{equation}\label{eq:fisum}
\phi(X_0,X_1)+\eta(X_0,X_1)=\varphi(X_0,X_1)
\end{equation}
with equivalent norms (with a constant not greater than 2).

Take $(x_i)_{i=1}^n\in \varphi(X_0,X_1)$ such that $\|\sum_{i=1}^n |x_i|\|_{\varphi(X_0,X_1)}<1$, hence $\sum_{i=1}^n|x_i|\leq\varphi(u_0,u_1)$ for some $u_i\in X_i$ with $\|u_i\|_{X_i}\leq1$ for $i=0,1$. According to \eqref{eq:fisum} and using the Riesz decomposition property we can write $x_i=v_i+w_i$ where
$$
\sum_{i=1}^n |v_i|\leq \phi(u_0,u_1),\hspace{1cm}\text{ and }\hspace{1cm}\sum_{i=1}^n |w_i|\leq \eta(u_0,u_1).
$$
On the one hand, notice that $\phi(X_0,X_1)$ coincides, up to a $c$-equivalent norm, with $X_0$, $X_1$ or $X_0+X_1$ for some $c>0$.  Hence, by Lemma \ref{l:sumregular} we have that
\begin{equation}\label{eq:phiok}
\Big\|\bigvee_{i=1}^n|Tv_i|\Big\|_{\phi(Y_0,Y_1)}\leq 2 R c.
\end{equation}
On the other hand, by Lemma \ref{l:approximation} there exist a constant $\gamma$, and sequences $(w_i^{m})_{m\in\mathbb N}$ for $1\leq i\leq n$, such that
\begin{equation}\label{eq:bound}
\sup_m\Big\|\bigvee_{i=1}^n|Tw_i^m|\Big\|_{\eta(Y_0,Y_1)}\leq\gamma R
\end{equation}
and for every $i=1,\ldots,n$ and some $(a_m)_{m\in\mathbb N}$ with $a_m\underset{m\rightarrow\infty}\longrightarrow0$,
\begin{equation}\label{eq:approx}
|w_i^m-w_i|\leq (u_0\vee u_1) a_m.
\end{equation}
Note, in particular, \eqref{eq:approx} implies that
$$
\max_{1\leq i\leq n}\|v_i+w_i^m-x_i\|_{X_0+X_1}\underset{m\rightarrow\infty}\longrightarrow0,
$$
and also that
$$
\Big\|\bigvee_{i=1}^n|Tv_i+Tw_i^m|-\bigvee_{i=1}^n|Tx_i|\Big\|_{Y_0+Y_1}\underset{m\rightarrow\infty}\longrightarrow 0.
$$
While, putting together \eqref{eq:phiok} and \eqref{eq:bound}, we have
\begin{equation}
\Big\|\bigvee_{i=1}^n|Tv_i+Tw_i^m|\Big\|_{\varphi(Y_0,Y_1)}\leq\Big\|\bigvee_{i=1}^n|Tv_i|\Big\|_{\phi(Y_0,Y_1)}+\Big\|\bigvee_{i=1}^n|Tw_i^m|\Big\|_{\eta(Y_0,Y_1)}\leq (2+\gamma) R
\end{equation}
This proves claim \eqref{claimfi}.

Using the fact that $T:X_0+X_1\rightarrow Y_0+Y_1$ is bounded, the following density argument will finish the proof. Given $(x_i)_{i=1}^n\subset X_0+X_1$ with $\|\sum_{i=1}^n|x_i|\|_{\varphi^c(X_0,X_1)}<1$, we can find $(x^m)_{m\in\mathbb N}\subset X_0+X_1$ such that
$$
\sup_m\|x^m\|_{\varphi(X_0,X_1)}<1,\hspace{5mm} \text{and}\hspace{5mm}\|x^m-\sum_{i=1}^n|x_i|\|_{X_0+X_1}\rightarrow 0.
$$
Without loss of generality, we can write $x^m=\sum_{i=1}^n |x_i^m|$ for some $(x_i^m)_{m\in \mathbb N}$ such that $\sum_{i=1}^n |x_i^m|\leq\sum_{i=1}^n|x_i|$ and $\|x_i^m-x_i\|_{X_0+X_1}\rightarrow0 $ for every $i=1,\ldots,n$. By claim \eqref{claimfi}, it follows that for every $m\in \mathbb N$, $(T x^m_i)_{i=1}^n\subset \varphi^c(Y_0,Y_1)$ with
\begin{equation}\label{eq:boundap}
\Big\|\bigvee_{i=1}^n|Tx_i^m|\Big\|_{\varphi(Y_0,Y_1)}\leq\gamma R.
\end{equation}
Now, since $T:X_0+X_1\rightarrow Y_0+Y_1$ is bounded, we have that for every $i=1,\ldots,n$, $\|Tx_i^m-Tx_i\|_{Y_0+Y_1}\rightarrow 0$, and in particular we have that
\begin{equation}
\Big\|\bigvee_{i=1}^n|Tx_i^m|-\bigvee_{i=1}^n|Tx_i|\Big\|_{Y_0+Y_1}\rightarrow0.
\end{equation}
This shows that
$$
\Big\|\bigvee_{i=1}^n|Tx_i|\Big\|_{\varphi^c(Y_0,Y_1)}\leq \gamma R
$$
and finishes the proof.
\end{proof}

\begin{remark}
The proof given here is heavily motivated by the one in \cite[Theorem 4.3.11]{BK} and follows a similar approach. Actually, under the assumptions of Theorem \ref{gagliardo}, the proof of \cite[Theorem 4.3.11]{BK} essentially shows that $T:\varphi^c(X_0,X_1)\rightarrow\varphi^c(Y_0,Y_1)$ is bounded as long as $(X_0,X_1)$ and $(Y_0,Y_1)$ are interpolation couples of Banach lattices of measurable functions on certain measure space. However, the one given here is more general since the lattices we deal with do not necessarily consist of functions over a measure space.
\end{remark}

\section{Quasi-Banach lattices with the $K_{p,q}$ property}\label{Krivine property}

An application of Grothendieck's inequality due to J. L. Krivine \cite{Krivine} (see also \cite[Theorem 1.f.14]{LT2}) yields that for any Banach lattices $E,F$, every bounded linear operator $T:E\rightarrow F$ is $(2,2)$-regular with $\rho_{2,2}(T)\leq K_G\|T\|$, where $K_G$ denotes Grothendieck's constant.

This fact was later extended by N. J. Kalton to $L$-convex quasi-Banach lattices in \cite{kalton}. Recall that a quasi-Banach lattice $E$ is $L$-convex whenever its order intervals are uniformly locally convex, that is, whenever there exists $0<\varepsilon<1$ so that if $u\in E_+$ with $\|u\|=1$ and $0\leq x_i\leq u$ (for $i=1,\ldots,n$) satisfy
$$
\frac1n(x_1+\ldots+x_n)\geq(1-\varepsilon)u,
$$
then
$$
\max_{1\leq i\leq n}\|x_i\|\geq\varepsilon.
$$

In particular, every Banach lattice is $L$-convex, and so is a quasi-Banach lattice which is for an equivalent quasi-norm the $p$-concavification of a Banach lattice. In fact every $L$-convex quasi-Banach lattice is of this kind by \cite[Theorem 2.2]{kalton}, so that $L$-convex quasi-Banach lattices are exactly Nilsson's quasi-Banach lattices {\it of type} $\mathcal C$ \cite[Definition 1.7]{Nilsson}. These include classical spaces like $L_p$, $\Lambda(W,p)$ and  $L_{p,\infty}$ for $0<p\leq\infty$. On the other hand, examples of non $L$-convex quasi-Banach lattices are the $L_p(\phi)$ spaces ($0<p<\infty$) with respect to pathological submeasures $\phi$ (see \cite{kalton,Talagrand}).

Motivated by these facts we introduce the following

\begin{definition}
A quasi-Banach lattice $F$ has the $K_{p,q}$ property with constant $C>0$, if for every quasi-Banach lattice $E$, every bounded linear operator $T:E\rightarrow F$ is $(p,q)$-regular with $\rho_{p,q}(T)\leq C\|T\|$.
\end{definition}

By \cite[Theorem 3.3]{kalton}, every $L$-convex quasi-Banach lattice has the $K_{2,2}$ property. As far as we know, it is still unknown whether the converse holds. However, $L$-convex quasi-Banach lattices constitute a large collection of spaces for which our results hold. In particular, this includes every quasi-Banach lattice $E$ such that $\ell_\infty$ is not lattice finitely representable in $E$. Also, if $F$ is an $L$-convex quasi-Banach lattice and $E$ is a quasi-Banach lattice which is linearly homeomorphic to a subspace of $F$, then $E$ is $L$-convex.

Note that if a quasi-Banach lattice has the $K_{p,q}$ property for some $p,q$, then it has the $K_{\infty,1}$ property. Let us summarize this in the following chain of implications for a quasi-Banach lattice $E$:
$$
\textrm{locally convex} \Rightarrow \, L-\textrm{convex}\,  \Rightarrow  \,  K_{2,2}\,\textrm{property} \, \Rightarrow \, \, K_{\infty,1}\,\textrm{property.}
$$

We will focus now on the $K_{\infty,1}$ property for a quasi-Banach lattice, which is the weakest among the above properties.

\begin{proposition}\label{prop Kinfty1}
For a quasi-Banach lattice $E$, the following are equivalent:
\begin{enumerate}
\item[(1)] $E$ has the $K_{\infty,1}$ property with constant $C$.
\item[(2)] Every operator $T:\ell_\infty\rightarrow E$ is $(\infty,1)$-regular with  $\rho_{\infty,1}(T)\leq C\|T\|$.
\item[(3)] For every $(x_i)_{i=1}^n\subset E$ we have
$$
\big\|\max_{1\leq i\leq n}|x_i|\big\|\leq C\max_{|a_i|\leq1}\Big\|\sum_{i=1}^n a_ix_i\Big\|.
$$
\end{enumerate}
\end{proposition}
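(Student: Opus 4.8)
The plan is to establish the cycle of implications $(1)\Rightarrow(2)\Rightarrow(3)\Rightarrow(1)$.

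The implication $(1)\Rightarrow(2)$ requires no argument: since $\ell_\infty$ is itself a quasi-Banach lattice, condition (2) is merely the special case of (1) obtained by choosing the source space to be $\ell_\infty$. For $(2)\Rightarrow(3)$, I would fix $(x_i)_{i=1}^n\subset E$ and define the operator $T:\ell_\infty\to E$ by $T((a_k)_k)=\sum_{i=1}^n a_ix_i$, which depends only on the first $n$ coordinates and is plainly bounded. Its norm is exactly $\|T\|=\max_{|a_i|\le1}\|\sum_{i=1}^n a_ix_i\|$, the quantity on the right-hand side of (3). Applying the $(\infty,1)$-regularity estimate for $T$ to the standard unit vectors $e_1,\dots,e_n\in\ell_\infty$, and noting that $Te_i=x_i$ together with $\|\sum_{i=1}^n|e_i|\|_{\ell_\infty}=1$, gives
$$
\big\|\max_{1\le i\le n}|x_i|\big\|\le\rho_{\infty,1}(T)\Big\|\sum_{i=1}^n|e_i|\Big\|_{\ell_\infty}\le C\|T\|,
$$
which is precisely (3).

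The substantive step is $(3)\Rightarrow(1)$. Here I would let $G$ be an arbitrary quasi-Banach lattice, $T:G\to E$ a bounded operator, and take $(x_i)_{i=1}^n\subset G$. Applying hypothesis (3) to the family $(Tx_i)_{i=1}^n\subset E$ and then using linearity and boundedness of $T$ yields
$$
\Big\|\bigvee_{i=1}^n|Tx_i|\Big\|\le C\max_{|a_i|\le1}\Big\|\sum_{i=1}^n a_iTx_i\Big\|=C\max_{|a_i|\le1}\Big\|T\Big(\sum_{i=1}^n a_ix_i\Big)\Big\|\le C\|T\|\max_{|a_i|\le1}\Big\|\sum_{i=1}^n a_ix_i\Big\|_G.
$$
The final ingredient is the elementary lattice inequality $|\sum_{i=1}^n a_ix_i|\le\sum_{i=1}^n|x_i|$, valid whenever $|a_i|\le1$, which by monotonicity of the quasi-norm gives $\max_{|a_i|\le1}\|\sum_{i=1}^n a_ix_i\|_G\le\|\sum_{i=1}^n|x_i|\|_G$. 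Combining these estimates shows $\rho_{\infty,1}(T)\le C\|T\|$, that is, (1).

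The main obstacle is conceptual rather than technical: all three conditions are essentially reformulations of one another, and the only direction carrying genuine content is $(3)\Rightarrow(1)$, where the a priori weak, self-referential condition (3) — involving only elements of $E$ and no external operators — must be upgraded to the universal property (1) quantified over all source lattices $G$. What makes this painless is that $(\infty,1)$-regularity may be tested on arbitrary finite families, so applying (3) to the images $Tx_i$ reduces everything to the monotonicity of the lattice quasi-norm; notably, neither the quasi-triangle inequality nor the associated modulus constant of the quasi-norm enters the argument.
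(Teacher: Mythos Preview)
Your proof is correct and follows essentially the same approach as the paper: the trivial implication $(1)\Rightarrow(2)$, the finite-rank operator $T((a_k))=\sum_{i=1}^n a_ix_i$ on $\ell_\infty$ tested on the unit vectors for $(2)\Rightarrow(3)$, and the application of (3) to the images $Tx_i$ combined with $|\sum a_ix_i|\le\sum|x_i|$ for $(3)\Rightarrow(1)$. Your write-up is slightly more explicit about the lattice inequality and the identification of $\|T\|$, but there is no substantive difference.
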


\begin{proof}
$(1)\Rightarrow(2)$ is trivial. Suppose $(2)$ holds, then given $(x_i)_{i=1}^n\subset E$, let $T:\ell_\infty\rightarrow E$ be the operator defined by
$$
T(a_i)=\sum_{i=1}^n a_ix_i,
$$
for $(a_i)_{i=1}^\infty\in\ell_\infty$. Let $e_i\in\ell_\infty$ denote the sequence having 1 in the i-th position and 0 elsewhere. By hypothesis, the operator $T$ is $(\infty,1)$-regular with $\rho_{\infty,1}(T)\leq C\|T\|$, which in particular yields
$$
\big\|\max_{1\leq i\leq n}|x_i|\big\|=\big\|\max_{1\leq i\leq n}|Te_i|\big\|\leq C\|T\|\Big\|\sum_{i=1}^n|e_i|\Big\|=C\max_{|a_i|\leq1}\Big\|\sum_{i=1}^n a_ix_i\Big\|.
$$
Therefore, $(3)$ holds.

For the implication $(3)\Rightarrow(1)$, if $F$ is a quasi-Banach lattice and $T:F\rightarrow E$ is bounded, then
$$
\big\|\max_{1\leq i\leq n}|Tx_i|\big\|\leq C\max_{|a_i|\leq1}\Big\|\sum_{i=1}^n a_iTx_i\Big\|\leq C\|T\|\Big\|\sum_{i=1}^n |x_i|\Big\|.
$$
Hence, $\rho_{\infty,1}(T)\leq C\|T\|.$
\end{proof}

A modification of \cite[Example 3.5]{kalton} provides an example of a quasi-Banach lattice without the $K_{\infty,1}$ property:

\begin{example}\label{no Kinfty1}
For each $n\in \mathbb{N}$, let $\Omega_n$ be the unit sphere in $\ell_\infty^n$, that is $\Omega_n=\{v\in\mathbb R^n:\max_{1\leq i\leq n}|v_i|=1\}$. Let $\mathcal{A}_n$ denote the algebra of all subsets of $\Omega_n$. For $u\in\mathbb{R}^n\backslash\{0\}$, let
$$
B_u=\{v\in \Omega_n:\sum_{i=1}^n u_iv_i\neq0\}.
$$
Let us consider the normalized submeasure defined, for $A\in\mathcal A_n$, by
$$
\phi_n(A)=\frac1n\inf\big\{\#S:A\subset\bigcup_{u\in S} B_u\big\}.
$$
Given $0<p<1$, consider the quasi-Banach lattice $L_p(\Omega_n,\mathcal A_n,\phi_n)$ which is the completion of the simple $\mathcal A_n$-measurable functions $f:\Omega_n\rightarrow \mathbb R$, with respect to the quasi-norm
$$
\|f\|_p=\Big(\int_0^\infty\phi_n(|f|\geq t^\frac1p)dt\Big)^\frac1p.
$$

Now, for $1\leq i\leq n$, let $f_i:\Omega_n\rightarrow \mathbb R$ be given by $f_i(v)=v_i$. It is clear that $\max_{1\leq i\leq n}|f_i(v)|=1$ for every $v\in\Omega_n$, thus
$$
\|\max_{1\leq i\leq n}|f_i|\|_p=1.
$$
On the other hand, for $a\in\mathbb R^n$ with $|a_i|\leq 1$ we have
$$
\Big|\sum_{i=1}^n a_if_i\Big|\leq n \chi_{B_a}.
$$
Therefore, we have
$$
\Big\|\sum_{i=1}^n a_if_i\Big\|_{p}\leq n^{1-\frac1p}.
$$
Taking $E$ to be the $\ell_\infty$-product of the spaces $L_p(\Omega_n,\mathcal A_n,\phi_n)$ for $n\in\mathbb N$, by Proposition \ref{prop Kinfty1}, we see that $E$ cannot have the $K_{\infty,1}$ property.
\end{example}

\section{Interpolation functors}

A direct consequence of Theorem \ref{gagliardo} yields that the functor $\varphi^c$ is an interpolation functor in the category of quasi-Banach lattices with the $K_{\infty,1}$ property:

\begin{corollary}\label{c:varphi^c}
If $(X_0,X_1)$ and $(Y_0,Y_1)$ are compatible pairs of quasi-Banach lattices such that $Y_0$ and $Y_1$ have the $K_{\infty,1}$ property, then for every $T:(X_0,X_1)\rightarrow (Y_0,Y_1)$ and every function $\varphi\in\mathcal P$, we have that $T:\varphi^c(X_0,X_1)\rightarrow \varphi^c(Y_0,Y_1)$.
\end{corollary}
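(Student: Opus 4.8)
The plan is to deduce this directly from Theorem \ref{gagliardo}, the entire point being that the hypotheses of that theorem are automatically satisfied once the target spaces $Y_0$ and $Y_1$ carry the $K_{\infty,1}$ property. First I would unwind the definition of a morphism $T:(X_0,X_1)\to(Y_0,Y_1)$ in $\overset{\longrightarrow}{\mathcal{QBL}}$: by definition the restrictions $T|_{X_i}:X_i\to Y_i$ are bounded linear operators for $i=0,1$. This is all that the ambient categorical structure provides on its own.

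Next I would invoke the $K_{\infty,1}$ property of the targets. Since $Y_i$ has the $K_{\infty,1}$ property, say with constant $C_i$, its defining property (equivalently, the characterization in Proposition \ref{prop Kinfty1}) forces \emph{every} bounded operator into $Y_i$ to be $(\infty,1)$-regular; in particular $T|_{X_i}:X_i\to Y_i$ is $(\infty,1)$-regular with $\rho_{\infty,1}(T|_{X_i})\le C_i\|T|_{X_i}\|$ for $i=0,1$. This is precisely the hypothesis on $T$ demanded by Theorem \ref{gagliardo}.

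With both restrictions now known to be $(\infty,1)$-regular, Theorem \ref{gagliardo} applies verbatim and yields that $T:\varphi^c(X_0,X_1)\to\varphi^c(Y_0,Y_1)$ is itself $(\infty,1)$-regular, with the quantitative estimate $\rho_{\infty,1}(T|_{\varphi^c(X_0,X_1)})\le C\max\{\rho_{\infty,1}(T|_{X_0}),\rho_{\infty,1}(T|_{X_1})\}$. Finally, since every $(\infty,1)$-regular operator is bounded with operator norm dominated by $\rho_{\infty,1}$, this gives the asserted boundedness of $T$ between the Gagliardo completions, which is exactly the conclusion of the corollary.

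I do not anticipate any genuine obstacle: all of the analytic content has already been packaged into Theorem \ref{gagliardo}, and the only step remaining is the translation from plain boundedness to $(\infty,1)$-regularity, which is furnished for free by the $K_{\infty,1}$ hypothesis on $Y_0$ and $Y_1$. In this sense the result is a corollary in the strict logical sense, with the $K_{\infty,1}$ property serving precisely as the bridge that makes the regularity hypothesis of Theorem \ref{gagliardo} automatic.
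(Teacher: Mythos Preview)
Your proposal is correct and follows essentially the same approach as the paper: use the $K_{\infty,1}$ property of $Y_0$ and $Y_1$ to upgrade boundedness of $T|_{X_i}$ to $(\infty,1)$-regularity, then apply Theorem~\ref{gagliardo} and conclude boundedness from $\|T\|\le\rho_{\infty,1}(T)$. The paper's proof is identical in structure, including the final quantitative estimate.
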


\begin{proof}
Let $(X_0,X_1)$, $(Y_0,Y_1)$ be compatible couples of quasi-Banach lattices such that $Y_0$ and $Y_1$ have the $K_{\infty,1}$ property. Let $T:X_0+X_1\rightarrow Y_0+Y_1$ be an operator which is bounded as an operator $T|_{X_0}:X_0\rightarrow Y_0$ and $T|_{X_1}:X_1\rightarrow Y_1$. It follows that $T|_{X_i}$ are $(\infty,1)$-regular for $i=0,1$ so Theorem \ref{gagliardo} yields that $T:\varphi^c(X_0,X_1)\rightarrow \varphi^c(Y_0,Y_1)$ is $(\infty,1)$-regular, so in particular it is bounded and moreover
\begin{eqnarray*}
\|T|_{\varphi^c(X_0,X_1)}\|&\leq&\rho_{\infty,1}(T|_{\varphi^c(X_0,X_1)})\leq\max\{\rho_{\infty,1}(T|_{X_0}),\rho_{\infty,1}(T|_{X_1})\}\\
&\leq& C\max\{\|T|_{X_0}\|,\|T|_{X_1}\|\},
\end{eqnarray*}
where $C>0$ only depends on the $K_{\infty,1}$ constants of $Y_0$ and $Y_1$.
\end{proof}

Recall that given $(X_0,X_1)$ we can also consider $\varphi^0(X_0,X_1)$ the closure of the intersection $X_0\cap X_1$ in $\varphi(X_0,X_1)$. Our aim is to show that this is also an interpolation functor. We will need some technicalities first:

\begin{definition}
A function $\varphi\in\mathcal P$ is called doubly bounded provided there exists $C>0$ such that $\varphi_i(t)\leq C$ for $i=0,1$.
\end{definition}

\begin{lemma}\label{l:doublybounded}
A function $\varphi\in\mathcal P$ is doubly bounded if and only if $\varphi(s,t)\approx\min(s,t)$.
\end{lemma}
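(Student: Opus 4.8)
The plan is to reduce the whole statement to the one-variable function $\varphi_1(t)=\varphi(1,t)$, exploiting the homogeneity identity $\varphi(s,t)=s\varphi_1(t/s)$ recorded in the preliminaries. Since $\min(s,t)=s\min(1,t/s)$ by the same homogeneity, it suffices to prove that $\varphi$ is doubly bounded if and only if $\varphi_1(t)\approx\min(1,t)$ on $(0,\infty)$, with constants depending only on the doubly-bounded constant $C$; feeding such an equivalence back through $\varphi(s,t)=s\varphi_1(t/s)$ immediately yields $\varphi(s,t)\approx s\min(1,t/s)=\min(s,t)$.

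For the forward implication I would treat the ranges $t\ge 1$ and $t\le 1$ separately. When $t\ge1$, monotonicity of $\varphi_1$ gives $\varphi_1(t)\ge\varphi_1(1)=1$, while double boundedness gives $\varphi_1(t)\le C$; hence $\varphi_1(t)\approx1=\min(1,t)$ on this range. When $t\le1$, the fact that $\varphi_1(t)/t$ is non-increasing (quasi-concavity) forces $\varphi_1(t)/t\ge\varphi_1(1)=1$, i.e.\ $\varphi_1(t)\ge t=\min(1,t)$. The non-trivial bound here is the upper one, and this is exactly where the second half of the doubly-bounded hypothesis enters: using the relation $\varphi_1(t)=t\varphi_0(1/t)$ together with $\varphi_0(1/t)\le C$ (valid since $1/t\ge1$) yields $\varphi_1(t)\le Ct$. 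Thus $\varphi_1(t)\approx t=\min(1,t)$ for $t\le1$ as well, and combining the two ranges gives $\varphi_1\approx\min(1,\cdot)$ with constants controlled by $C$.

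The converse is immediate: if $\varphi(s,t)\approx\min(s,t)$ with upper constant $c$, then evaluating at $(t,1)$ and $(1,t)$ gives $\varphi_0(t)=\varphi(t,1)\le c\,\min(t,1)\le c$ and $\varphi_1(t)=\varphi(1,t)\le c\,\min(1,t)\le c$ for all $t>0$, so $\varphi$ is doubly bounded with constant $c$.

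I do not expect a genuine obstacle here; the single point requiring care is that boundedness of $\varphi_1$ alone is \emph{not} enough to pin down the behaviour near $0$ — the upper estimate $\varphi_1(t)\le Ct$ for small $t$ relies essentially on the boundedness of the \emph{other} function $\varphi_0$, through the duality relation $\varphi_1(t)=t\varphi_0(1/t)$. Keeping track of how the two boundedness conditions divide the responsibility for large and small $t$ is the only thing to watch.
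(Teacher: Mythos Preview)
Your argument is correct and follows essentially the same approach as the paper, relying on the identities $\varphi(s,t)=s\varphi_1(t/s)=t\varphi_0(s/t)$ together with the boundedness of $\varphi_0,\varphi_1$ for the upper estimate and monotonicity for the lower one. The paper's version is marginally more direct: rather than reducing to one variable and splitting into the cases $t\ge1$ and $t\le1$, it simply observes that $\varphi(s,t)=s\varphi_1(t/s)\le Cs$ and $\varphi(s,t)=t\varphi_0(s/t)\le Ct$, whence $\varphi(s,t)\le C\min(s,t)$, and pairs this with the trivial lower bound $\varphi(s,t)\ge\varphi(1,1)\min(s,t)$.
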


\begin{proof}
Suppose that there is $C>0$ such that for every $t\in\mathbb R_+$, we have $\varphi_0(t),\varphi_1(t)\leq C$. In this case, we get that
\begin{eqnarray*}
\varphi(s,t)=s\varphi_1(t/s)\leq Cs,\\
\varphi(s,t)=t\varphi_0(s/t)\leq Ct.
\end{eqnarray*}
Hence, it follows that $\varphi(s,t)\leq C\min(s,t)$. Since, for $\varphi\in\mathcal P$ we have the trivial estimate $\varphi(s,t)\geq\varphi(1,1)\min(s,t)$, the conclusion follows. The converse implication is clear.
\end{proof}

\begin{lemma}\label{varphi intersection}
Let $(X_0,X_1)$ be an interpolation couple of quasi-Banach lattices, and let $\varphi\in\mathcal P$. If $\varphi$ is not doubly bounded, and $\varphi_1(t)\rightarrow 0$ as $t\rightarrow 0$, then there is $C_{\varphi,\overline X}>0$, depending only of $\varphi$ and the quasi-norm constants of $X_0,X_1$, such that for every positive $x\in X_0\cap X_1$ with $\|x\|_{\varphi(X_0,X_1)}<1$, there exist positive $f,g\in X_0\cap X_1$ with $\|f\|_{X_0},\|g\|_{X_1}\leq C_\varphi$ and $x=\varphi(f,g)$.
\end{lemma}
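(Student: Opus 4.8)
The plan is to read the statement as an \emph{exact} Calder\'on--Lozanovskii factorization of $x$ in which the two factors are forced to live in the intersection. Since $\|x\|_{\varphi(X_0,X_1)}<1$, after an arbitrarily small dilation we may fix positive $\bar x_0\in X_0$, $\bar x_1\in X_1$ with $\|\bar x_0\|_{X_0}\le1$, $\|\bar x_1\|_{X_1}\le1$ and $x\le\varphi(\bar x_0,\bar x_1)$; the task is then to manufacture, out of $\bar x_0,\bar x_1$ and $x$, new positive factors $f,g\in X_0\cap X_1$ with $\|f\|_{X_0},\|g\|_{X_1}\le C$ and the \emph{equality} $\varphi(f,g)=x$. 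A first reduction passes from the inequality to an equality by lowering one coordinate: I would replace $\bar x_1$ by the pointwise solution of $\varphi(\bar x_0,\cdot)=x$, which exists and lies below $\bar x_1$ because the hypothesis $\varphi_1(0^+)=0$ forces $\varphi(\bar x_0,0)=0\le x\le\varphi(\bar x_0,\bar x_1)$ while $\varphi(\bar x_0,\cdot)$ is continuous and non-decreasing. This preserves the norm bounds, so the entire difficulty is concentrated in forcing the factors into $X_0\cap X_1$ without destroying those bounds.

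To carry out the construction I would work in the local representation: taking $e=\bar x_0\vee\bar x_1$ and the lattice homomorphism $J\colon C(\Omega)\to X_0+X_1$ with $J(B_{C(\Omega)})=[-e,e]$, the elements $x,\bar x_0,\bar x_1$ become nonnegative functions on a compact Hausdorff space $\Omega$ and $\varphi$ is evaluated pointwise through Krivine's calculus, so that $\varphi(f,g)=x$ becomes a scalar identity at each $\omega$. Membership of a factor in $X_0\cap X_1$ is then secured by dominating it simultaneously by a norm-controlled element of the appropriate space and by the available element $x\in X_0\cap X_1$; the key device is to mix $\bar x_0,\bar x_1$ with $x$ (for instance through truncations of the form $\bar x_0\wedge(\,\cdot\,)$, or through functional-calculus expressions such as $\varphi(\bar x_0,x)$ and $\varphi(x,\bar x_1)$, renormalised so that the level identity $\varphi(f,g)=x$ is restored). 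Quasi-concavity of $\varphi_0,\varphi_1$, together with the piecewise-linear control of Lemma~\ref{l:quasi-concave} and the elementary bounds $\varphi_i(s)\le\varphi_i(t)\le\frac ts\varphi_i(s)$, are then used to estimate every resulting quasi-norm by a constant depending only on $\varphi$ and on the quasi-norm constants of $X_0,X_1$.

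The main obstacle is the genuine tension in each factor between being controlled in one space and lying in the other — that $f$ be bounded in $X_0$ yet belong to $X_1$, and dually for $g$ — which degenerates at the scales where $x_1/x_0$ is very large or very small, i.e.\ where the optimal decomposition approaches the axes. This is exactly where the two hypotheses enter: $\varphi_1(0^+)=0$ governs the level curves near the $g$-axis and legitimises the reduction to equality, while the assumption that $\varphi$ is \emph{not doubly bounded} — equivalently, by Lemma~\ref{l:doublybounded}, that $\varphi\not\approx\min$ — excludes the degenerate profile in which both $\varphi_0$ and $\varphi_1$ stay bounded, guaranteeing that at least one direction is proper and that the compensating factor remains uniformly controlled. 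I expect the cleanest way to organise the two regimes is to split $\varphi_1=\phi_1+\eta_1$ as in the proof of Theorem~\ref{gagliardo}, isolating the linear or bounded part from the part satisfying $\eta_1(0^+)=0=\lim_{t\to\infty}\eta_1(t)/t$, treat each summand on its natural range of ratios, and patch the local prescriptions by a continuous partition of unity on $\Omega$ in such a way that $\varphi(f,g)=x$ is maintained throughout.
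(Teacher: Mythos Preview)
Your instinct to truncate $\bar x_0,\bar x_1$ by multiples of $x$ is exactly right, and the closing step (dividing in a $C(\Omega)$ model to turn an inequality into an equality) is also what the paper does. But two aspects of the plan are off, and one genuine idea is missing.

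First, the order of operations. You propose to secure the \emph{equality} $\varphi(\bar x_0,\tilde x_1)=x$ first (by lowering $\bar x_1$ pointwise), and only afterwards to push the factors into $X_0\cap X_1$. This is the harder direction: once you have equality, any truncation of $\bar x_0$ forces a compensating \emph{increase} of $\tilde x_1$, and you immediately lose control of $\|\tilde x_1\|_{X_1}$. The paper does the opposite: it first manufactures $u'',v''\in X_0\cap X_1$ with $x\le\varphi(u'',v'')$ \emph{and} $u'',v''$ both dominated by fixed multiples of $x$; only then does it pass to the ideal generated by $x$, represent it as $C(\Omega)$ with $\hat x=1\!\!1_\Omega$, observe that $\hat y=\varphi(\hat u'',\hat v'')\ge 1$ is invertible in $C(\Omega)$, and set $f=u''/\hat y$, $g=v''/\hat y$. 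Division by a function bounded below by $1$ only shrinks, so all norm bounds survive and $\varphi(f,g)=x$ follows by homogeneity.

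Second, the decomposition $\varphi_1=\phi_1+\eta_1$ and a partition of unity play no role here; that machinery belongs to Lemma~\ref{l:approximation}, not to this lemma. The argument is entirely elementary once you see the right truncation.

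The missing idea is the precise way ``not doubly bounded'' enters. Since $\varphi$ is not doubly bounded, by symmetry one may assume $\varphi_0(t)\to\infty$ as $t\to\infty$. Then for any $\delta>0$ there is $N>0$ with $\varphi(N,\delta)\ge1$, hence $\varphi(Nx,\delta x)\ge x$. Now set $u'=u\wedge Nx$ and $v'=v\vee\delta x$: the first lands in $X_0\cap X_1$ with $\|u'\|_{X_0}\le\|u\|_{X_0}$, the second stays in $X_1$ with $\|v'\|_{X_1}\le C_{X_1}$ for $\delta$ small, and crucially
\[
\varphi(u',v')=\varphi(u,v')\wedge\varphi(Nx,v')\ge\varphi(u,v)\wedge\varphi(Nx,\delta x)\ge x.
\]
If additionally $\varphi_1(t)\to\infty$ one repeats the trick in the other variable to obtain $u'',v''\in X_0\cap X_1$, both $\le(\text{const})\,x$, with $x\le\varphi(u'',v'')$; if instead $\varphi_1$ is bounded by $C_\varphi$, then $\varphi(s,t)\le C_\varphi s$ gives $x\le C_\varphi u'$ and one checks $x\le\varphi(C_\varphi u',\,x\wedge v')$, achieving the same end. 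In either case the factors lie in the ideal generated by $x$ and the division step above finishes the proof. Your sketch never isolates this ``$\varphi(N,\delta)\ge1$'' mechanism, which is the whole point of the non-double-boundedness hypothesis.
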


\begin{proof}
By symmetry of the argument, we can suppose without loss of generality that $\lim_{t\rightarrow\infty}\varphi_0(t)=\infty$. 

Hence, for every $\delta>0$, there is $N>0$ such that $\varphi_0(\frac{N}{\delta})\geq\frac{1}{\delta}$, or in other words, $\varphi(N,\delta)\geq 1$.

Assume that $x\in (X_0\cap X_1)^+$ with $\|x\|_{\varphi(X_0,X_1)}<1$, and let $u\in X_0^+$, $v\in X_1^+$ with $\|u\|_{X_0}<1$, $\|v\|_{X_1}<1$ and
$$
x\leq\varphi(u,v).
$$

Let $C_{X_1}$ be the quasi-norm constant of $X_1$, and $\delta>0$ be small enough so that $\|v\vee \delta x\|_{X_1}<C_{X_1}$, and let $N>0$ such that $\varphi(N,\delta)\geq1$. Let $u'=u\wedge Nx$ and $v'=v\vee \delta x$. Note that $u'\in X_0\cap X_1$, $\|u'\|_{X_0}<1$, and $v'\in X_1$, $\|v'\|_{X_1}<C_{X_1}$. Moreover,
$$
\varphi(u',v')=\varphi(u,v')\wedge \varphi(Nx, v')\geq\varphi(u,v)\wedge\varphi(Nx,\delta x)= x\wedge \varphi(N,\delta) x\geq x.
$$

We distinguish two cases:\\
(a) If now we also have that $\lim_{t\rightarrow\infty} \varphi_1(t)=\infty$, then we can proceed in a similar way as before exchanging the roles of the variables in $\varphi$: let $0<\varepsilon<N$ be small enought so that $\|u'\vee \varepsilon x\|_{X_0}<1$, and let $M>0$ such that $\varphi(\varepsilon, M)\geq1$. Then, take $u^{\prime\prime}=u'\vee \varepsilon x$ and $v^{\prime\prime}=v'\wedge Mx$ which also satisfy $u^{\prime\prime},v^{\prime\prime}\in X_0\cap X_1$ with $\|u^{\prime\prime}\|_{X_0}<C_{X_0}$, $\|v^{\prime\prime}\|_{X_1}<C_{X_1}$ and $x\leq \varphi(u^{\prime\prime},v^{\prime\prime})$. 

Moreover,
$$
\varphi(u^{\prime\prime},v^{\prime\prime})\leq \varphi(Nx, Mx,)\leq \varphi(N,M) x.
$$
Consequently, we can consider $J_0(x)$ the (non-closed) ideal generated by $x$, which can be considered as a $C(\Omega)$ space for some compact Hausdorff space $\Omega$. Thus, we can consider the functions $\hat{u^{\prime\prime}}, \hat{v^{\prime\prime}}, \hat{y}\in C(\Omega)$ corresponding respectively to $u^{\prime\prime}, v^{\prime\prime}$ and $y=\varphi(u^{\prime\prime}, v^{\prime\prime})$. Recall that in this correspondence $x$ ir represented by $\hat x=1\!\!1_\Omega$, so 
$$
\hat{y}\geq \hat{x}=1\!\!1_\Omega.
$$
Thus, $\frac{1}{\hat{y}}\in C(\Omega)$ with $\|\frac{1}{\hat{y}}\|\leq1$. Set $\hat{f}=\frac{u^{\prime\prime}}{\hat{y}}$, and $\hat{g}=\frac{v^{\prime\prime}}{\hat{y}}$, which clearly correspond to elements $f,g\in J_0(x)$ such that 
$$
\varphi(f,g)=x.
$$
This identity follows from the fact that
$$
\varphi(\hat{f},\hat{g})=\varphi(\frac{u^{\prime\prime}}{\hat{y}},\frac{v^{\prime\prime}}{\hat{y}}=\frac{\varphi(u^{\prime\prime},v^{\prime\prime})}{\hat{y}}=1\!\!1_\Omega=\hat{x}.
$$
Moreover, we have
$$
f\leq u^{\prime\prime}\leq Nx,\quad g\leq v^{\prime\prime}\leq Mx.
$$
Hence, $f.g\in X_0\cap X_1$, with $\|f\|_{X_0}\leq \|u^{\prime\prime}\|_{X_0}<C_{X_0}$ and $\|g\|_{X_1}\leq \|v^{\prime\prime}\|_{X_1}<C_{X_1}$.
\medskip

\noindent (b) If on the contrary, $\varphi_1$ is bounded, then set $C_\varphi=\sup_{s>0}\varphi_1(s)<\infty$, so that 
$$
\varphi(s,t)=s\varphi_1(\frac{t}{s})\leq C_\varphi s.
$$
Since $x=\varphi(u',v')$, we have $x\leq C_\varphi u'$ and 
$$
x=\varphi(x,x)\leq \varphi(C_\varphi u',x).
$$
On the other hand, $x=\varphi(u',v')\leq \varphi(C_\varphi u',v')$ (assuming without loss of generality that $C_\varphi \geq1$). Thus,
$$
x\leq \varphi(C_\varphi u',x\wedge v').
$$
Then, we can take $u^{\prime\prime}=Cu'$ and $v^{\prime\prime}=x\wedge v'$. Then $u^{\prime\prime},v^{\prime\prime}$ belong to $J_0(x)$, the (non-closed) ideal generated by $x$, which correspond to the space $C(\Omega)$, and satisfy
$$
\|u^{\prime\prime}\|_{X_0}\leq C_\varphi ,\quad \|v^{\prime\prime}\|_{X_1}<C_{X_1}.
$$
Hence, as before we may find $f\leq u^{\prime\prime}$ and $g\leq v^{\prime\prime}$ with $x=\varphi(f,g)$.
\end{proof}

This fact will allow us to show that $\varphi^0$ is an interpolation functor in the category of quasi-Banach lattices with the $K_{\infty,1}$ property. More precisely:

\begin{theorem}
Let $(X_0,X_1)$ and $(Y_0,Y_1)$ be compatible pairs of quasi-Banach lattices and $T:X_0+X_1\rightarrow Y_0+Y_1$ such that $T|_{X_j}:X_j\rightarrow Y_j$ is $(\infty,1)$-regular for $j=0,1$. Then for  every function $\varphi\in\mathcal P$, we have that $T:\varphi^0(X_0,X_1)\rightarrow \varphi^0(Y_0,Y_1)$ is $(\infty,1)$-regular with
$$
\rho_{\infty,1}(T|_{\varphi^0(X_0,X_1)})\leq C\max\{\rho_{\infty,1}(T|_{X_0}),\rho_{\infty,1}(T|_{X_1})\},
$$
for some $C>0$ depending only on $X_0$, $X_1$, $Y_0$, $Y_1$ and $\varphi$.
\end{theorem}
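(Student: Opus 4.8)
The plan is to reduce the statement to a single estimate on the dense subspace $X_0\cap X_1$ and then pass to the closure, mirroring the proof of Theorem~\ref{gagliardo} but keeping all representing elements inside the intersection. Note first that $T$ maps $X_0\cap X_1$ boundedly into $Y_0\cap Y_1$, since for $z\in X_0\cap X_1$ one has $\|Tz\|_{Y_0\cap Y_1}=\max_{j}\|T|_{X_j}z\|_{Y_j}\le R\,\|z\|_{X_0\cap X_1}$, where $R=\max\{\rho_{\infty,1}(T|_{X_0}),\rho_{\infty,1}(T|_{X_1})\}$, and $Y_0\cap Y_1$ embeds contractively into $\varphi^0(Y_0,Y_1)$. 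Thus it suffices to prove the following claim: there is $C>0$ so that for every finite family $(x_i)_{i=1}^n\subset X_0\cap X_1$ with $\|\sum_i|x_i|\|_{\varphi(X_0,X_1)}\le1$ we have $\bigvee_i|Tx_i|\in Y_0\cap Y_1$ and $\|\bigvee_i|Tx_i|\|_{\varphi(Y_0,Y_1)}\le CR$. Granting this, applying the estimate to differences shows that $T$ sends sequences of $X_0\cap X_1$ that are Cauchy in the $\varphi(X_0,X_1)$-norm to sequences of $Y_0\cap Y_1$ that are Cauchy in the $\varphi(Y_0,Y_1)$-norm; since $X_0\cap X_1$ is dense in $\varphi^0(X_0,X_1)$ and $\varphi^0(Y_0,Y_1)$ is closed in $\varphi(Y_0,Y_1)$, the operator extends to $\varphi^0(X_0,X_1)\to\varphi^0(Y_0,Y_1)$ as an $(\infty,1)$-regular map with the desired constant.

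The difficulty is that Theorem~\ref{gagliardo} only bounds the output in the weaker $\varphi^c(Y_0,Y_1)$-norm, whereas the claim requires the genuine $\varphi(Y_0,Y_1)$-norm; bridging this gap is the heart of the argument. I would do it through the approximation of Lemma~\ref{l:approximation}: given representing elements $u_0,u_1$ with $\sum_i|x_i|\le\varphi(u_0,u_1)$, that lemma produces $x_i^m$ inside the ideal generated by $u_0\vee u_1$ with $\sup_m\|\bigvee_i|Tx_i^m|\|_{\varphi(Y_0,Y_1)}\le\gamma R$ and $\bigvee_i|x_i-x_i^m|\le(u_0\vee u_1)a_m$, $a_m\to0$. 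The crucial point is to arrange $u_0,u_1\in X_0\cap X_1$: then the ideal lies inside $X_0\cap X_1$, so $Tx_i^m\in Y_0\cap Y_1$, and $\|x_i-x_i^m\|_{X_0\cap X_1}\le a_m\|u_0\vee u_1\|_{X_0\cap X_1}\to0$, whence $Tx_i^m\to Tx_i$ in $Y_0\cap Y_1$ and therefore in $\varphi(Y_0,Y_1)$. Passing to the limit upgrades the uniform bound from $\varphi^c$ to $\varphi$ and shows the limit lies in $Y_0\cap Y_1$.

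It remains to produce representatives in $X_0\cap X_1$ and to accommodate the hypotheses of Lemma~\ref{l:approximation}, namely $\varphi_1(0^+)=0=\lim_{t\to\infty}\varphi_1(t)/t$. If $\varphi$ is doubly bounded then $\varphi(X_0,X_1)\approx X_0\cap X_1$ by Lemma~\ref{l:doublybounded}, so $\varphi^0(X_0,X_1)=X_0\cap X_1$ and the claim is immediate from the two regularity constants. Otherwise I split $\varphi_1=\phi_1+\eta_1$ exactly as in the proof of Theorem~\ref{gagliardo}, with $\phi_1(s)=\varphi_1(0^+)\vee s\lim_t\varphi_1(t)/t$ and $\eta_1$ enjoying the required limits. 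To obtain $u_0,u_1\in X_0\cap X_1$ of controlled norm with $\sum_i|x_i|\le\varphi(u_0,u_1)$: when $\varphi_1(0^+)=0$, or symmetrically $\lim_t\varphi_1(t)/t=0$, this is exactly Lemma~\ref{varphi intersection}; when both $\varphi_1(0^+)=a>0$ and $\lim_t\varphi_1(t)/t=b>0$ I truncate given representatives, replacing $u_0,u_1$ by $u_0\wedge(x/a)$ and $u_1\wedge(x/b)$, where $x=\sum_i|x_i|\in X_0\cap X_1$; these lie in $X_0\cap X_1$, do not increase the relevant norms, and still satisfy $x\le\varphi(u_0\wedge(x/a),u_1\wedge(x/b))$ because $\varphi(s,t)\ge as$ and $\varphi(s,t)\ge bt$.

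Finally, working in the $C(\Omega)$-representation of the ideal generated by $u_0\vee u_1$ I decompose each $x_i=v_i+w_i$, with $v_i,w_i\in X_0\cap X_1$, $\sum_i|v_i|\le\phi(u_0,u_1)$ and $\sum_i|w_i|\le\eta(u_0,u_1)$ (multiplying $x_i$ by the continuous ratios $\phi(h_0,h_1)/\varphi(h_0,h_1)$ and $\eta(h_0,h_1)/\varphi(h_0,h_1)$, where $h_0,h_1$ represent $u_0,u_1$). The $\eta$-part, for which $\eta_1$ has the good limits and $u_0,u_1$ lie in $X_0\cap X_1$, is handled by the approximation argument above (and trivially if $\eta$ happens to be doubly bounded), giving $\|\bigvee_i|Tw_i|\|_{\eta(Y_0,Y_1)}\le CR$; the $\phi$-part is handled by Lemma~\ref{l:sumregular}, since $\phi(X_0,X_1)$ equals, up to equivalence, one of $X_0$, $X_1$, $X_0+X_1$, giving $\|\bigvee_i|Tv_i|\|_{\phi(Y_0,Y_1)}\le CR$. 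Since $\bigvee_i|Tx_i|\le\bigvee_i|Tv_i|+\bigvee_i|Tw_i|$ and $\varphi(Y_0,Y_1)=\phi(Y_0,Y_1)+\eta(Y_0,Y_1)$ with equivalent norms, adding the two estimates proves the claim. I expect the genuine obstacle to be exactly the upgrade from the $\varphi^c$- to the $\varphi$-norm on the target, which is what forces the insistence on representatives in $X_0\cap X_1$; the case splitting around $\varphi_1(0^+)$ and $\lim_t\varphi_1(t)/t$ is routine once the truncation trick is available.
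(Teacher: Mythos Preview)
Your proof is correct and follows essentially the same strategy as the paper: reduce to a $\varphi$-norm estimate for finite families in $X_0\cap X_1$, split $\varphi=\phi+\eta$, treat the $\phi$-part via Lemma~\ref{l:sumregular} and the $\eta$-part via Lemma~\ref{l:approximation} with representing elements $u_0,u_1\in X_0\cap X_1$, so that the approximants converge in $X_0\cap X_1$ and the uniform $\varphi^c$-bound upgrades to a genuine $\varphi$-bound. The only difference is the order of operations: the paper first performs the Riesz decomposition $x_i=f_i+g_i$ with arbitrary $u_0,u_1$, and \emph{then} applies Lemma~\ref{varphi intersection} directly to $\eta$ (for which $\eta_1(0^+)=0$ holds by construction) to obtain $v_0,v_1\in X_0\cap X_1$ with $\sum_i|g_i|=\eta(v_0,v_1)$; this sidesteps your extra truncation case $a,b>0$, while your parenthetical treatment of ``$\eta$ doubly bounded'' covers a case the paper passes over.
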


\begin{proof}
If $\varphi$ is doubly bounded, by Lemma \ref{l:doublybounded}, it follows that $\varphi^0(X_0,X_1)=X_0\cap X_1$ (with an equivalent norm). Therefore, in this case the conclusion follows. 

Note that we can consider a decomposition as the one given in \eqref{eq:decomp}:
\begin{equation}
\phi_1(s)=\lim_{t\rightarrow0^+}\varphi_1(t)\vee s\lim_{t\rightarrow\infty}\frac{\varphi_1(t)}{t},\hspace{1cm}\textrm{ and }\hspace{1cm} \eta_1=\varphi_1-\phi_1.
\end{equation}
As before, note that $\phi_1$ is convex, so $\eta_1$ is concave. Thus, taking $\phi(s,t)=s\phi_1\big(\frac{t}{s}\big)$ and $\eta(s,t)=s\eta_1\big(\frac{t}{s}\big)$, it holds that
\begin{equation}\label{eq:decompfi0}
\varphi=\phi+\eta
\end{equation}
where $\phi(s,t)\approx\max(s,t)$ and $\lim_{t\rightarrow0}\eta_1(t)=0=\lim_{t\rightarrow\infty}\frac{\eta_1(t)}{t}.$

Let $(x_i)_{i=1}^n\subset X_0\cap X_1$ be positive with $\|\sum_{i=1}^n|x_i|\|_{\varphi(X_0,X_1)}<1$. Since $T x_i\in Y_0\cap Y_1$ for every $1\leq i\leq n$, it will be enough to show that
\begin{equation}\label{eq:boundfi0}
\|\max_{1\leq i\leq n}|Tx_i|\|_{\varphi(Y_0,Y_1)}\leq \gamma\max\{\rho_{\infty,1}(T|_{X_0}),\rho_{\infty,1}(T|_{X_1})\},
\end{equation}
for a certain constant $\gamma>0$ independent of $T$ and $(x_i)_{i=1}^n$.

Note that $\sum_{i=1}^n|x_i|\leq \varphi(u_0,u_1)$ with $u_j\in X_j$ and $\|u_j\|_{X_j}\leq1$. Using the Riesz decomposition property and \eqref{eq:decompfi0}, we can write $x_i=f_i+g_i$ with $0\leq f_i,g_i\leq x_i$ in $X_0\cap X_1$, such that $f_i\leq\phi(u_0,u_1)$ and $g_i\leq\eta(u_0,u_1)$.

On the one hand, since $\phi(X_0,X_1)$ coincides, up to an equivalent norm, with $X_0$, $X_1$ or $X_0+X_1$, using Lemma \ref{l:sumregular}, it follows that
\begin{equation}
\|\max_{1\leq i\leq n}|Tf_i|\|_{\phi(Y_0,Y_1)}\leq\gamma_0\max\{\rho_{\infty,1}(T|_{X_0}),\rho_{\infty,1}(T|_{X_1})\}
\end{equation}
for a certain constant $\gamma_0$. On the other hand, since we can assume that $\varphi$, and hence $\eta$, is not doubly bounded, by Lemma \ref{varphi intersection}, there exist $C_{\eta,\overline X} > 0$ and $v_0,v_1\in X_0\cap X_1$ with $\|v_j\|_{X_j}\leq C_{\eta,\overline X}$, such that 
$$
\sum_{i=1}^n |g_i|=\eta(v_0,v_1).
$$
Hence, Lemma \ref{l:approximation} applied to $(g_i)_{i=1}^n$, $v_0$ and $v_1$ provides for $1\leq i\leq n$ sequences $(g_i^m)_{m\in\mathbb N}$ in $X_0+X_1$ such that for $m\in \mathbb{N}$ we have 
$$
\max_{1\leq i\leq n}|g_i-g_i^m|\leq (v_0\vee v_1) a_m
$$ 
for certain $a_m\in\mathbb R_+$ with $a_m\underset{m\rightarrow\infty}\longrightarrow0$, and 
$$
\sup_m\Big\|\max_{1\leq i\leq n}|Tg_i^m|\Big\|_{\varphi(Y_0,Y_1)}\leq\gamma \max\{\rho_{\infty,1}(T|_{X_0}),\rho_{\infty,1}(T|_{X_1})\}.
$$

Hence, since $v_0,v_1\in X_0\cap X_1$, for every $1\leq i\leq n$, it holds that $g_i^m\rightarrow g_i$ in $X_0\cap X_1$. In particular, $Tg_i^m\rightarrow Tg_i$ also in $Y_0\cap Y_1$, which yields
\begin{equation}
\|\max_{1\leq i\leq n}|Tg_i|\|_{\eta(Y_0,Y_1)}\leq\gamma\max\{\rho_{\infty,1}(T|_{X_0}),\rho_{\infty,1}(T|_{X_1})\}.
\end{equation}
Since $Tx_i=Tf_i+Tg_i$, this finishes the proof.
\end{proof}

The above result immediately yields the following:

\begin{corollary}\label{varphi0}
If $(X_0,X_1)$ and $(Y_0,Y_1)$ are compatible pairs of quasi-Banach lattices such that $Y_0$ and $Y_1$ have the $K_{\infty,1}$ property, then for every $T:(X_0,X_1)\rightarrow (Y_0,Y_1)$ and every function $\varphi\in\mathcal P$, we have that $T:\varphi^0(X_0,X_1)\rightarrow \varphi^0(Y_0,Y_1)$.
\end{corollary}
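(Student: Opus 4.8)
The plan is to deduce this corollary directly from the preceding theorem, exactly as Corollary \ref{c:varphi^c} was deduced from Theorem \ref{gagliardo}. The entire content of the statement has already been carried out in that theorem (and, before it, in Lemmas \ref{l:approximation} and \ref{varphi intersection}); the only new observation needed is that the $K_{\infty,1}$ hypothesis on the target lattices automatically converts the boundedness of a morphism into the $(\infty,1)$-regularity required as input for the theorem.

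First I would unwind the definition of a morphism in $\overset{\longrightarrow}{\mathcal{QBL}}$: given $T:(X_0,X_1)\rightarrow(Y_0,Y_1)$, the restrictions $T|_{X_i}:X_i\rightarrow Y_i$ are bounded for $i=0,1$. Next, since each $Y_i$ has the $K_{\infty,1}$ property with some constant $C_i>0$, applying that property to the bounded operator $T|_{X_i}:X_i\rightarrow Y_i$ shows that $T|_{X_i}$ is $(\infty,1)$-regular with $\rho_{\infty,1}(T|_{X_i})\leq C_i\|T|_{X_i}\|$. Thus both restrictions satisfy the regularity hypothesis demanded by the preceding theorem.

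With this in hand, I would simply invoke the preceding theorem (the $\varphi^0$ analogue of Theorem \ref{gagliardo}) to conclude that $T:\varphi^0(X_0,X_1)\rightarrow\varphi^0(Y_0,Y_1)$ is $(\infty,1)$-regular, and in particular bounded, with
$$
\|T|_{\varphi^0(X_0,X_1)}\|\leq\rho_{\infty,1}(T|_{\varphi^0(X_0,X_1)})\leq C\max\{\rho_{\infty,1}(T|_{X_0}),\rho_{\infty,1}(T|_{X_1})\}\leq C\max\{C_0,C_1\}\max\{\|T|_{X_0}\|,\|T|_{X_1}\|\},
$$
where $C>0$ depends only on $X_0,X_1,Y_0,Y_1$ and $\varphi$. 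Combined with the obvious inclusions $Y_0\cap Y_1\hookrightarrow\varphi^0(Y_0,Y_1)\hookrightarrow Y_0+Y_1$, this exhibits $\varphi^0$ as an interpolation functor on the category of quasi-Banach lattices with the $K_{\infty,1}$ property.

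I do not expect any genuine obstacle here: the delicate analysis — the piecewise-linear decomposition of Lemma \ref{l:quasi-concave}, the approximation scheme of Lemma \ref{l:approximation}, and the representation $\sum_i|g_i|=\eta(v_0,v_1)$ supplied by Lemma \ref{varphi intersection} — was already absorbed into the preceding theorem. This corollary is the purely categorical observation that, within the class of targets enjoying the $K_{\infty,1}$ property, \emph{every} morphism of couples meets the regularity requirement, so the regularity-based theorem upgrades at once to a statement about arbitrary bounded morphisms.
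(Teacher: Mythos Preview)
Your proposal is correct and matches the paper's approach exactly: the paper states that the corollary follows immediately from the preceding theorem, and your argument---using the $K_{\infty,1}$ property of $Y_0,Y_1$ to upgrade boundedness of $T|_{X_i}$ to $(\infty,1)$-regularity and then invoking that theorem---is precisely the intended deduction, mirroring the proof of Corollary~\ref{c:varphi^c}.
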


\begin{remark}
If $X_0$ and $X_1$ are quasi-Banach lattices of measurable functions over a measure space and for some constant $M>0$ and vectors $(x_i)_{i=1}^n\subset X_j$ it holds that
\begin{equation}\label{Kinfty1rad}
\big\|\max_{1\leq i\leq n}|x_i|\big\|_{X_j}\leq M\max_{t\in[0,1]}\Big\|\sum_{i=1}^n r_i(t)x_i\Big\|_{X_j},
\end{equation}
where $r_i$ denotes the $i$-th Rademacher function, and the function $\varphi\in\mathcal P$ satisfies the condition that $\varphi(s,t)\rightarrow 0$ as $s\rightarrow 0$ or $t\rightarrow 0$, and $\varphi(s,t)\rightarrow \infty$ as $s\rightarrow \infty$ or $t\rightarrow \infty$, then \cite[Theorem 2.1]{Nilsson} asserts that $\varphi^0(X_0,X_1)$ coincides with the $\langle\cdot\rangle_\varphi$-method introduced by J. Peetre in \cite{Peetre}. Note that by Proposition \ref{prop Kinfty1}, condition \eqref{Kinfty1rad} implies the $K_{\infty,1}$ property of $X_j$. Hence, under these somehow stronger assumptions, the interpolation result of Theorem \ref{varphi0} also follows from this fact.
\end{remark}

\begin{remark}
We do not know whether the $K_{\infty,1}$ property in Corollaries \ref{c:varphi^c} and \ref{varphi0} is actually necessary.
\end{remark}

{\bf Acknowledgments.} Second author gratefully acknowledges support of Spanish MINECO through grants MTM2012-31286 and MTM2013-40985-P, as well as Grupo UCM 910346. He wishes to thank the Equipe d'Analyse Fonctionnelle of the Institut de Math\'ematiques de Jussieu for their always warm hospitality. We also thank the anonymous referees for their valuable comments.

\bibliographystyle{amsplain}

\end{document}